\documentclass[a4paper,12pt,review]{elsarticle}
\usepackage[margin=2.5cm]{geometry}
\usepackage[final]{showkeys}
\usepackage{fancyhdr}
\usepackage{amsmath}
\usepackage{amsfonts,amssymb,amsbsy,mathcomp}
\usepackage{upgreek}
\usepackage{appendix}
\usepackage{color}
\usepackage{caption}
\usepackage{subcaption}
\usepackage{verbatim}
\usepackage{booktabs}

\usepackage[figuresright]{rotating}

\usepackage{tabularx}
\usepackage{float}
\newcommand{\R}{{\mathbb R}}   %  set of real numbers
\newcommand{\sign}[1]{{\text{sign}}#1}

\newtheorem{theorem}{Theorem}

\newtheorem{lemma}[theorem]{Lemma}

\newtheorem{remark}[theorem]{Remark}
\newenvironment{proof}[1][Proof]{\textbf{#1.} }{\hfill \raisebox{-0.1em}{$\Box$}\\[2ex]}

\newcommand{\bigO}{{\mathcal{O}}}

\newcommand{\Ppol}{\mathbb{P}}

\newcommand{\blue}{\color{blue}}

\newcommand{\doi}[1]{{\texttt{ #1}}}

\definecolor{jancolor}{RGB}{180,0,180}
\definecolor{piotrcolor}{RGB}{0,0,255}
\definecolor{boriscolor}{RGB}{255,0,0}

\newif\ifrevision
\revisiontrue
\ifrevision

\else

\fi

\journal{Chemical Engineering Research and Design}

\begin{document}
\begin{frontmatter}
%\title{{\bf Numerical simulations for two-dimensional reaction-diffusion problems with formation of multiple dead zones}}
\title{{\bf Geometry-induced formation and merging of multiple dead zones in diffusion-reaction problems}}

\author[ad5]{Jan Valdman\corref{mycorr}}
\cortext[mycorr]{Corresponding author}
\ead{jan.valdman@utia.cas.cz}
\address[ad5]{Czech Academy of Sciences, Institute of Information Theory and Automation,
Pod vod\'arenskou v\v{e}\v z\'\i\ 4, 182 00, Prague 8, Czechia}

\address[ad4]{School of Digital Sciences and Engineering, Nazarbayev University, 
53 Kabanbay Batyr Ave., Astana, 010000, Kazakhstan}
\author[ad4]{Boris Golman}

\author[ad1]{Piotr Skrzypacz}
\address[ad1]{School of Sciences and Humanities, Nazarbayev University, 
53 Kabanbay Batyr Ave., Astana, 010000, Kazakhstan}

%=========================================================================================================================
\begin{abstract}
This study focuses on dead-core solutions for a two-dimensional isothermal reaction-diffusion problem involving a single chemical reaction described by power-law kinetics within a bounded reactor domain. The steady-state nonlinear diffusion-reaction boundary value problem is solved numerically by applying a suitable time-stepping technique.
The lumped finite element technique with piecewise linear shape functions is utilized to discretize the spatial domain. 
Dead zones, defined as regions inside a reactor where no reaction occurs because the reactant has been completely consumed, present a major difficulty in the design and operation of chemical reactors.
The impact of the reaction exponent and the Thiele modulus on the concentration fields and on the size of such inactive regions is numerically investigated. The findings indicate that particular reactor shapes can lead to the formation of several separate dead zones.
\\[1.5ex]
\end{abstract}

%=========================================================================================================================
\begin{keyword} 
diffusion-reaction equation \sep power-law reaction kinetics \sep 
dead zone\sep 
lumped finite elements\sep
time-marching scheme\\[2ex]
%=========================================================================================================================
\MSC[2010] 
%76A02 %Foundations of fluid mechanics
%\sep
80A30 %Chemical kinetics [See also 76V05, 92C45, 92E20] 
\sep
35K57 % Reaction-diffusion equations.
\sep
80A32 %Chemically reacting flows [See also 92C45, 92E20]
\sep
%76R50: Diffusion (fluid mechanics).
%sep
65N30 %FEM
\end{keyword}
\end{frontmatter}
%==================================================================================
{\bf Declarations of interest: none}\\
%===========================================================================================================
% Intro
%==============================================================================
\section{Introduction}\label{sec1}
%===================================================================================================================
The power-law rate equation is widely adopted for industrial reactions. For the single irreversible reaction
\[
Reactant~A\rightarrow Products\,,
\] it takes the form
\[
r_A = k_A C_A^p\,,
\]
with $r_A$  being the reaction rate, $C_A$ the concentration of reactant $A$, $p$ the reaction exponent, and $k_A$ the rate constant. In this paper we assume isothermal conditions, meaning that the reaction rate constant does not vary with temperature. The reaction exponents $p$ can be positive or negative integers and fractions. Fractional-order power-law kinetics appear in a variety of industrial reactions. Among these are the synthesis of methanol from carbon dioxide and hydrogen \cite{kobl_2016}, the reforming of methane with carbon dioxide \cite{paksoy_2019}, and the wet air oxidation of sodium sulfide in petroleum refinery processes \cite{barge_2020}.
The occurrence of dead zones--areas within a reactor where reactions cease due to reactant depletion--poses a significant challenge in chemical reactor engineering.
A wide range of important catalytic reactions in industrial practice are characterized by power-law kinetics that involve fractional exponents \cite{romero_88,spence_1995,szukiewicz_2019}. When the reaction exponent is fractional, the resulting kinetics can drive reactant concentrations to zero within specific spatial regions, known as {\it dead zones} or {\it dead cores}. The term {\it dead zone} was originally coined by Temkin \cite{temkin_1975} and has since become widely established in the literature \cite{szukiewicz_2019,skrzypacz_2020,skrzypacz_2022}. Such inactive regions can significantly reduce reactor performance \cite{shi_2013} and have been documented in diverse systems, including the hydrogenation of propylene with commercial catalysts \cite{szukiewicz_2019}, electricity production in microbial fuel cells \cite{islam_2019}, and biochemical reactions taking place in catalytic particles containing immobilized enzymes \cite{pereira_2016}. Therefore, to enhance the performance of catalytic reactors, it is essential to predict and reduce dead zones in advance. Eliminating or shrinking these unproductive areas leads to a more even distribution of reactants, improved selectivity, and greater overall productivity.

Model problems that involve dead zones are numerically challenging for standard solvers because the nonlinear reaction term becomes non-differentiable when the concentration approaches zero. Existing solvers reported in the literature are also quite inefficient \cite{Kelley,chen_2001,aziz_1988,barrett_1991,valdes_2008,solsvik_2013}. Recent studies \cite{skrzypacz_membr_2021,skrzypacz_membr_2025}  introduced an efficient numerical approach based on a modified Crank-Nicolson scheme to solve a steady-state nonlinear problem with power-law kinetics and a fractional reaction exponent. In contrast, model problems whose solutions lack dead zones can be solved numerically using conventional iterative solvers or Taylor expansion methods \cite{skrzypacz_taylor_2024}.

%A substantial amount of research has been devoted to examining dead-core formation within catalyst slabs. However, the overwhelming majority of these investigations has concentrated on one-dimensional diffusion-reaction problems that incorporate power-law kinetics (see, for example, \cite{skrzypacz_2020, skrzypacz_2022, skrzypacz_sr_2022, skrzypacz_2023, szukiewicz_2020, Boris_nonisothermal}). In contrast, relatively little progress has been made in understanding how multiple dead-cores arise in two-dimensional configurations.

Previous studies have primarily concentrated on dead-core formation in one-dimensional catalyst pellets or on the computation of a single inactive region (see, for example, \cite{skrzypacz_2020, skrzypacz_2022, skrzypacz_sr_2022, skrzypacz_2023, szukiewicz_2020, Boris_nonisothermal}). In contrast, the present work investigates situations in which several disconnected inactive regions may coexist within a two-dimensional reactor. Particular attention is devoted to the influence of the reactor geometry on the topology of the dead-zone and on transitions between disconnected and connected dead-zone configurations. 
The primary contribution of this paper is the identification and numerical analysis of geometry-induced multiple dead zones. By enabling more efficient catalyst placement, this approach not only optimizes reactor performance but also yields substantial savings in costly materials. These insights are directly applicable to enhancing the design of ring-shaped and hole-cylinder pellets for fixed-bed reactors \cite{ring_shaped_2020,ring_shaped_2025}.

In the following, we assume that the constant concentration is prescribed on the reactor boundary. The lumped finite element method for spatial discretization combined with the appropriate time-marching scheme is employed to compute the concentration profiles and to show the formation of multiple dead zones for particular reactor geometries and process parameters like the Thiele modulus and reaction exponents. The objective of this paper is to develop the {\it implicit-explicit (IMEX)} time-marching scheme for dead-zone problems posed over two dimensional reactor domains of various geometries for isothermal reactions with power-law kinetics of fractional orders. In contrast to previous investigations of dead-core formation in diffusion-reaction systems with two-dimensional reactor geometries, our study proposes an IMEX pseudo-time-stepping scheme combined with a lumped finite element method (FEM) for the efficient numerical simulation of dead-core dynamics. Unlike our time-marching approach, FEM schemes using regularization of the non-Lipschitz reaction term \cite{aziz_1988,barrett_1991} yield nonlinear algebraic systems that 
can require a large number of iterations for numerical solution.

The structure of this paper is as follows. The mathematical model is introduced in Section~\ref{sec:model}. The numerical methodology based on conforming piecewise linear finite elements and an implicit–explicit (IMEX) time-marching scheme is described in Section~\ref{sec:numerical}. Numerical results for various reactor geometries and reaction parameters are presented and discussed in Section~\ref{sec:results}. The organization of the MATLAB\textsuperscript{\tiny\textregistered}  implementation and code availability are summarized in Section~\ref{sec:code}. Finally, conclusions are drawn in Section~\ref{sec:conclusions}.

\medskip
\noindent
%-----------------------------------------------------
% notations for spaces
%_{
{\it Notation:}~The following standard notation is adopted throughout this paper. 
For any domain \( G \subset \mathbb{R}^2 \), we let \( H^m(G) \) represent 
the space of all \( L^2(G) \)-functions possessing weak derivatives up 
to order \( m \in \mathbb{N}_0 \) that also lie in \( L^2(G) \). 
The subset of functions in \( H^1(G) \) that vanish on the boundary 
is denoted by \( H_0^1(G) \), and the inner product in \( L^2(G) \) 
is written as \( (\cdot, \cdot)_G \). 
The symbols \( \|\cdot\|_{m,G} \) and \( |\cdot|_{m,G} \) indicate 
the standard norm and seminorm on \( H^m(G) \), respectively. 
When \( G = \Omega \), the subscript \( G \) is dropped for simplicity.
%_}
%-----------------------------------------------------
%===================================================================================================================
\section{Mathematical model}\label{sec:model}
%===================================================================================================================
Consider a bounded planar domain $\widetilde{\Omega}\subset\mathbb{R}^2$ that represents a chemical reactor, with its boundary $\widetilde{\Gamma}:=\partial\widetilde{\Omega}$ representing the reactor walls.
 The reactant transport inside $\widetilde{\Omega}$ is governed by diffusion and consumption through a single isothermal and irreversible reaction $A\rightarrow Products$ following power-law kinetics. Let $V\subset \widetilde{\Omega}$ be an arbitrary control volume. The steady-state isothermal mass balance is given as
\begin{equation}\label{eq_volume_balance}
 \int\limits_{\partial V} \boldsymbol{N}_A\cdot\boldsymbol{n}
 \,\mathrm{d}S +\int\limits_V r_A\,\mathrm{d}V =0\,, 
 \end{equation}
 where $\boldsymbol{N}_A(C_A)$ denotes the molar flux of species $A$ with concentration $C_A(\tilde{x})$ and $r_A(C_A)$ is its disappearance rate.    The first integral represents the net outward flux of $A$, while the second represents the total consumption inside the control volume $V$. Using the Fick's law
 \begin{equation*}
 \boldsymbol{N}_A=-D_{{\rm eff}, A}\nabla C_A,
 \end{equation*}
 it follows from Eq.~\eqref{eq_volume_balance} and the divergence theorem that
 \begin{equation*}
-\int\limits_V \nabla\cdot (D_{{\rm eff},A} \nabla C _A)\,\mathrm{d}V+\int\limits_V r_A(C_A)\,\mathrm{d}V=0\,.
\end{equation*}
 Here, $D_{\text{eff},A}$ is the effective diffusion coefficient of species $A$ within the catalyst structure. Since $V\subset\widetilde{\Omega}$ is arbitrary, \begin{equation}\label{reactionvolumedim}
\begin{array}{rcll}
-D_{{\rm eff},A}\Delta C_A+r_A(C_A)&=&0 &\text{in}\quad \widetilde{\Omega}\,,\\
C_A &=&  C_{A,b} &\text{on}\quad \widetilde{\Gamma}\,,
\end{array}
\end{equation}
where $D_{\text{eff},A}$ is assumed to be constant in $\widetilde{\Omega}$, and $C_{A,b}$ denotes the fixed concentration imposed on the reactor walls $\widetilde{\Gamma}$.
Let $r_A(C_A)=k_A[C_A]_+^p$ represent the power-law kinetics characterized by the rate constant $k_A>0$. 
The boundary value problem \eqref{reactionvolumedim} reads in the dimensionless form as follows \cite{skrzypacz_2020,Boris_nonisothermal,aris_vol12,bandle84}  
\begin{equation}\label{reactionvolume}
\begin{array}{rcll}
-\Delta u+\varphi^2 f(u)&=&0 & \text{in}\quad \Omega\,,\\
u &=& 1 & \text{on}\quad \Gamma\,,
\end{array}
\end{equation}
where $u=\frac{C_A}{C_{A,b}}$ stands for the dimensionless concentration, the dimensionless spatial coordinate is scaled by the reference reactor length $L$, 
$f(u)=[u]_+^p$ is related to the chemical kinetics and $\varphi$ denotes the Thiele modulus defined as 
\begin{equation}\label{Thielemodulus}
\varphi=\left(\frac{L^2k_AC_{A,b}^{p-1}}{D_{\text{eff},A}} \right)^{1/2}\,.
\end{equation}  
In the present study, we focus on power-law chemical kinetics featuring a fractional reaction exponent:
\begin{equation}\label{eq_fu_def}
f(u)=
\begin{cases}
[u]_+^p\,,& \quad p\in (0,1)\,,\\
\sign{([u]_+)}\,, & \quad p=0\,, 
\end{cases}
\end{equation} 
where $[u]_+=\max\{u,0\}$ and $\sign(\cdot)$ denotes the signum function defined as $\sign(u)=\frac{u}{|u|}$ for $u\neq 0$ and $\sign(0)=0$. Notice that in the case of $p\in (0,1)$, the function $f(u)$ lacks the Lipschitz continuity at $u=0$. However, one can show that $f(u)$ is H\"older continuous, i.e., it holds true $|f(u_1)-f(u_2)|\le \,|u_1-u_2|^p$ for all $u_1, u_2\in\R$. 
%%%%%%%%%%%%%%%%%%%%%%%%%%%%%%%%%%%%%%%%%%%%%%%%%%%%%%%%%%%%%%%%%%%%%%%%%%%%%%%%%%%%%%%%%%%%%%%%%%%%%%%%%%%%%%%%%%

To investigate the existence and uniqueness of the solution to \eqref{reactionvolume}, we first introduce the concept of a weak solution. The function $u\in H^1(\Omega)$ with $u\arrowvert_\Gamma=1$ 
is said to be a weak solution to \eqref{reactionvolume} if $u$ satisfies
\begin{equation}\label{weak_one_comp}
\bigl(\nabla u, \nabla v\bigr)+\varphi^2\bigl(f(u),v\bigr)=0\quad\text{for all}\quad v\in H^1_0(\Omega)\,.
\end{equation}
The weak solution to the boundary value problem \eqref{reactionvolume} exists and is unique; the proof can be found in \cite{aziz_1988,bandle84}.  Furthermore, the solution of Eq.~\eqref{weak_one_comp} satisfies \cite{bandle84}
\begin{equation}\label{eq_max_principle}
0\le u<1\quad\text{in}~\Omega\,.
\end{equation}
Alternatively, the weak solution to \eqref{reactionvolume} can be interpreted as a unique minimizer of the convex energy functional 
\begin{equation}\label{deadcore_energy}
\begin{split}
E[v]=\int\limits_\Omega \left(\frac{1}{2}|\nabla v|^2+\frac{\varphi^2}{p+1}[v]_+^{p+1}\right)\,\mathrm{d}x
\end{split}
\end{equation}
in the space of functions from $H^1(\Omega)$ having trace $v\arrowvert_\Gamma=1$. In the case of $p\in [0,1)$, the strong adsorption 
of chemical species can be faster than its supply by the diffusion across the reactor walls $\Gamma$. 
This can lead to the total depletion of reactant in some regions, the so called dead zones (or dead cores) 
\[
\Omega_{dz}:=\{x\in \Omega:\; u(x)=0\}\subset \Omega\,.
\]
No reaction takes place in these regions, which results in the inefficient use of typically expensive catalytic material. 
Hence, determining where dead zones form is a critical consideration when designing chemical reactors. Note that dead-zone formation can also occur for general kinetic expressions, such as the Langmuir–Hinshelwood–Hougen–Watson (LHHW) form 
$f(u)=\frac{u^p}{(1+\lambda u)^m}$ where $p\in [0,1)$, $m>0$, and $\lambda>0$ \cite{piotr_critT_LH}. 

It has been proved in \cite{friedman84} that the dead zone is convex if $\Omega$ is convex. 
Since the boundary of the dead zone $\partial\Omega_{dz}$ is a-priori not known, the considered class of problems can be also interpreted as free boundary value problems. Some analytical results concerning the location of dead-core and the regularity of dead-core solution have been reported in \cite{bandle84,friedman84}. From the analytical point of view the most important of them are $u\in C^1(\Omega)$ and $u\in C^{2,\alpha}(\Omega_{dz})$ with the H\"older index $\alpha=\alpha(p)$, and the convexity of the dead-core if the domain $\Omega$ is convex.

Let us briefly illustrate the dead-zone phenomenon by the following diffusion-reaction problem
\begin{equation}\label{deadcore_ball}
\begin{split}
-\Delta u+\varphi^2 [u]_+^p&=0 \quad \text{in}\quad B_1\,,\qquad u=1 \quad \text{on}\quad \partial B_1\,,
\end{split}
\end{equation}
where $B_1=\{ (x_1,x_2)\in \R^2:\; \sqrt{x_1^2+x_2^2}<1\}$ denotes the unit disc. The solution to \eqref{deadcore_ball} is radially symmetric, i.e., $u(x,y)=v(r)$, 
where $r(x_1,x_2)=\sqrt{x_1^2+x_2^2}$. Consequently, the boundary value problem \eqref{deadcore_ball} can be reduced to the following two-point boundary value problem
\begin{equation}\label{deadcore_1d}
\begin{split}
-v_{rr}-\frac{v_r}{r}+\varphi^2 [v]_+^p&=0 \quad \text{in}\quad (0,1)\,,\qquad v_r(0)=0, \quad v(1)=1\,,
\end{split}
\end{equation}
whose solution possesses the dead zone only if  $\varphi\ge \varphi^*$ where
\begin{equation}\label{eq_phi_crit}
\varphi^*=\frac{2}{1-p}\,,
\end{equation}
cf. \cite{skrzypacz_2020}. Here, $\varphi^*$ is the critical Thiele modulus corresponding to the appearance of zero-length dead zone, $r_{dz} = 0$, at the center of the unit disc, i.e., $u(0) = 0$.
The explicit form of the dead-core solution to the boundary value problem \eqref{deadcore_1d} (and consequently to \eqref{deadcore_ball}) for the critical Thiele modulus defined in \eqref{eq_phi_crit} is given by \cite{skrzypacz_2020}
\[
u^*(r)=r^{\frac{2}{1-p}}\,.
\]
It can be deduced from the maximum principles that at the constant reaction order the size of the dead zone increases with the increasing values of the Thiele modulus. The monotonic behaviour of the dead-zone size with respect to the increasing $\varphi$ can be
proved analytically by the maximum principle, cf. \cite{stakgold_98}. In \cite{sperb_95} some bounds for the predicted location of dead zones have been derived analytically. Notice that the dead-core solution to \eqref{deadcore_ball} is no more analytic on $\partial\Omega_{dz}$. 
On the one hand the non-dead-core solution of problem \eqref{deadcore_ball}, i.e., for $\varphi< \varphi^*$, is smooth and can be obtained numerically without any advanced iteration methods but it cannot be expressed in a closed form. On the other hand, 
the existing iterative solvers for the stationary problem are for the case of dead-core solutions, i.e., 
for $\varphi> \varphi^*$, rather inefficient \cite{Kelley,chen_2001,aziz_1988,barrett_1991,lemos2014}
%ramirez2008}.

We use the time-marching method to compute the steady-state solution $u(x)$. For this purpose, we consider the unsteady equation governing the time-dependent solution $\tilde{u}(x,t)$:
\begin{equation}\label{false_Tr}
\frac{\partial \tilde{u}}{\partial t}-\Delta\tilde{u}+\varphi^2f(\tilde{u})=0
\end{equation}
subject to the boundary and the initial conditions 
\begin{equation}\label{bcic_one_comp}
\begin{split}
\tilde{u}(\cdot,t)&=1\quad\text{on}\quad \partial\Omega,\quad\text{for}\quad t>0\,,\qquad \tilde{u}(x,0)=1\quad\text{for}\quad x\in\Omega\,.  
\end{split}
\end{equation}
A standard result from \cite{Lady68} states that the solution to the quasilinear problem by Eqs.~\eqref{false_Tr}-\eqref{bcic_one_comp} exists and is unique in the class of bounded functions if $f$ is continuous and non-decreasing. Particularly, it can be shown that $\tilde{u}$ preserves positivity for all $t>0$. 

According to the following lemma, the solution of \eqref{false_Tr}-\eqref{bcic_one_comp} approaches the steady-state solution of \eqref{reactionvolume} in the $L^2$-norm when time tends to infinity.
\begin{lemma}\label{lemma_stab}
	Let $\tilde{u}(x,t)$ be a solution to the time-dependent problem \eqref{false_Tr}-\eqref{bcic_one_comp}, and let $u(x)$ be a solution to the boundary value problem \eqref{reactionvolume}. Then, it holds $\lim\limits_{t\to\infty} \tilde{u}(\cdot,t) = u$ in $L^2(\Omega)$. The convergence in the $L^2$-norm of $\tilde{u}(\cdot,t)$ towards the steady-state solution $u$ is exponentially fast for increasing $t>0$
	\begin{equation}\label{eq_ineq_expconv}
	\|\tilde{u}(\cdot,t)-u\|_0\le \|\tilde{u}(\cdot,0)-u\|_0\,e^{-\frac{1}{M_\Omega^2}\,t}\,,
	\end{equation}
where $M_\Omega>0$ is a positive constant depending on $\Omega$.
\end{lemma}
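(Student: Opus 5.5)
The plan is a standard energy estimate for the error $w(\cdot,t):=\tilde{u}(\cdot,t)-u$. The two ingredients are the monotonicity of $f$ and the Poincar\'e inequality on $H^1_0(\Omega)$.

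First, subtract the steady equation \eqref{reactionvolume} from \eqref{false_Tr}. Since $u$ does not depend on $t$, the error satisfies
\[
\frac{\partial w}{\partial t}-\Delta w+\varphi^2\bigl(f(\tilde{u})-f(u)\bigr)=0 \quad\text{in}~\Omega\,,
\]
with $w=0$ on $\Gamma$ for $t>0$, because both functions equal $1$ there. So $w(\cdot,t)\in H^1_0(\Omega)$ is an admissible test function. Test the equation with $w$, using the weak formulation \eqref{weak_one_comp} for $u$ and its parabolic analogue for $\tilde{u}$. This gives
\[
\frac{1}{2}\frac{\mathrm{d}}{\mathrm{d}t}\|w\|_0^2+|w|_1^2+\varphi^2\bigl(f(\tilde{u})-f(u),\tilde{u}-u\bigr)=0\,.
\]

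Second, drop the reaction term. The function $f$ in \eqref{eq_fu_def} is non-decreasing for $p\in(0,1)$, and also for $p=0$ with the chosen signum convention, at least as a monotone (possibly multivalued) graph. Hence $(f(a)-f(b))(a-b)\ge 0$ pointwise, and the third term is nonnegative. Then apply the Poincar\'e--Friedrichs inequality $\|v\|_0\le M_\Omega|v|_1$ for $v\in H^1_0(\Omega)$, which defines $M_\Omega$. This yields
\[
\frac{\mathrm{d}}{\mathrm{d}t}\|w\|_0^2\le -\frac{2}{M_\Omega^2}\|w\|_0^2\,.
\]

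Third, Gronwall's lemma gives $\|w(t)\|_0^2\le\|w(0)\|_0^2e^{-2t/M_\Omega^2}$. Taking square roots gives exactly \eqref{eq_ineq_expconv}, and the limit statement follows immediately.

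The main obstacle is justifying these manipulations with the regularity actually available. We need $\partial_t\tilde{u}\in L^2(0,T;H^{-1})$ and $\tilde{u}\in L^2(0,T;H^1)$ so that $\frac{\mathrm{d}}{\mathrm{d}t}\|w\|_0^2=2\langle\partial_t w,w\rangle$ holds (Lions--Magenes). The initial datum $\tilde{u}(\cdot,0)=1$ is compatible with the boundary data, and the existence result of \cite{Lady68} for continuous non-decreasing $f$ supplies these bounds, so I would invoke it.

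For $p=0$ the nonlinearity $\sign$ is discontinuous, and this needs care. I would either interpret $f$ as the maximal monotone graph of $\sign$ on $[0,\infty)$, or argue by regularization $f_\varepsilon$ and pass to the limit. The regularization route works because the estimate is uniform in $\varepsilon$, as it uses only monotonicity.
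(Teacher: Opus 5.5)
Your proposal is correct and follows essentially the same route as the paper: test the difference of the transient and steady equations with the error $\tilde{u}-u\in H^1_0(\Omega)$, drop the reaction term by monotonicity, apply the Poincar\'e inequality with constant $M_\Omega$ (which the paper identifies as $1/\sqrt{\lambda_1}$), and conclude with Gronwall. Your remarks on parabolic regularity and on the discontinuous case $p=0$ go somewhat beyond the paper. The paper only invokes $(\alpha^p-\beta^p)(\alpha-\beta)\ge 0$ for nonnegative arguments and $p\in(0,1]$, relying on the positivity of $\tilde{u}$ and $u$.
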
 
\begin{proof}
The convergence of the transient solution towards the stationary solution follows from a standard energy argument. Testing the difference of Eqs.~\eqref{reactionvolume} and \eqref{false_Tr} by the error $\tilde{u}(\cdot,t)-u(\cdot)\in H^1_{0}(\Omega)$ yields	
    %Subtracting Eq.\eqref{reactionvolume} from Eq.\eqref{false_Tr}, using $\frac{\partial u}{\partial t}=0$, multiplying the resulting equation by the test function $\tilde{u}(\cdot,t)-u(\cdot)\in H^1_{0}(\Omega)$  and integrating by parts over $\Omega$ with respect to the spatial variable, yields	
    \begin{equation*}%\label{proof_lem2_integral}
	\begin{split}
	&\frac{1}{2}\frac{d}{dt}\|\tilde{u}(\cdot,t)-u\|_0^2+|\tilde{u}(\cdot,t)-u|_1^2+\varphi^2\Bigl( [\tilde{u}]_+^p(\cdot,t)-[u]_+^p, \tilde{u}(\cdot,t)-u\Bigr)=0\,.
	\end{split}
	\end{equation*}
	Notice that $(\alpha^p-\beta^p)(\alpha-\beta)\ge 0$ for all $\alpha,\beta\ge 0$ and $p\in (0,1]$. Then, we obtain
	\[
	\frac{d}{dt}\|\tilde{u}(\cdot,t)-u\|_0^2\le -2|\tilde{u}(\cdot,t)-u|_1^2\,.
	\]
Next, we employ the Poincar\'e-type inequality
	\[
	\|\tilde{u}(\cdot,t)-u\|_0\le M_\Omega|\tilde{u}(\cdot,t)-u|_1\,,
	\]
	where the positive constant $M_\Omega$ depends on the size of the reactor domain $\Omega$. The constant $M_\Omega$ is characterized as follows
    \[
    M_{\Omega} = \sup_{v \in H^1_0(\Omega),\; v \neq 0} \frac{\|v\|_{0}}{|v|_{1}}\,,
    \]
    or equivalently as
    \[
    M_{\Omega} = \frac{1}{\sqrt{\lambda_1}},
    \]
    where $\lambda_1$
  is the smallest eigenvalue of the Dirichlet Laplacian on $\Omega$  \cite{henrot}:
    \[
    -\Delta v = \lambda v \quad \text{in } \Omega, \qquad v = 0 \quad \text{on } \partial\Omega\,.
    \]
The first Dirichlet eigenvalue satisfies the Faber--Krahn
inequality
\[
\lambda_1(\Omega)
\geq
\frac{\pi}{|\Omega|}j_{0,1}^{\,2}\approx \frac{18.1684}{|\Omega|}\,,
\]
where \(|\Omega|\) denotes the volume of $\Omega$ and $j_{0,1}\approx 2.4048255577\ldots$ is the first positive root of the Bessel function of the first kind of order zero, \(J_0\). Note that equality holds if and only if \(\Omega\) is a ball
(up to sets of measure zero) \cite{henrot}.
    Then, the inequality of Poincar\'e-type implies that 
	\[
	\frac{d}{dt}\|\tilde{u}(\cdot,t)-u\|_0^2\le -\frac{2}{M_\Omega^2}\,\|\tilde{u}(\cdot,t)-u\|_0^2\,.
	\]
    From Gronwall's inequality we derive \eqref{eq_ineq_expconv}, which guarantees that 
    $\tilde{u}(\cdot,t)$ converges exponentially to $u$ in the $L^2$-norm as $t\to\infty$.
\end{proof}
An analogous lemma for the exponentially fast pointwise convergence has been established in \cite{Ricci}.
Notice that the statement of Lemma~\ref{lemma_stab} can be generalized to the case of monotonically increasing functions $f(u)$.

%===================================================================================================================
\section{Numerical method}\label{sec:numerical}

\subsection{Spatial FEM discretization}

For the finite element discretization of \eqref{weak_one_comp}, we consider a shape-regular family $\{\mathcal{T}_h\}$ of triangulations of $\Omega$. Examples of computational meshes are shown in Figure~\ref{fig:fourholes_mesh}. The meshes and all forthcoming numerical figures were generated using the Matlab\textsuperscript{\tiny\textregistered} PDE Toolbox package {\it pdetool}. 
The only exception is the unit-disk domain, for which we constructed a
partially rotationally structured mesh to better represent the expected
circular level sets and dead-core interface.

\begin{figure}[H]
    \centering
     \includegraphics[width=0.45\linewidth]{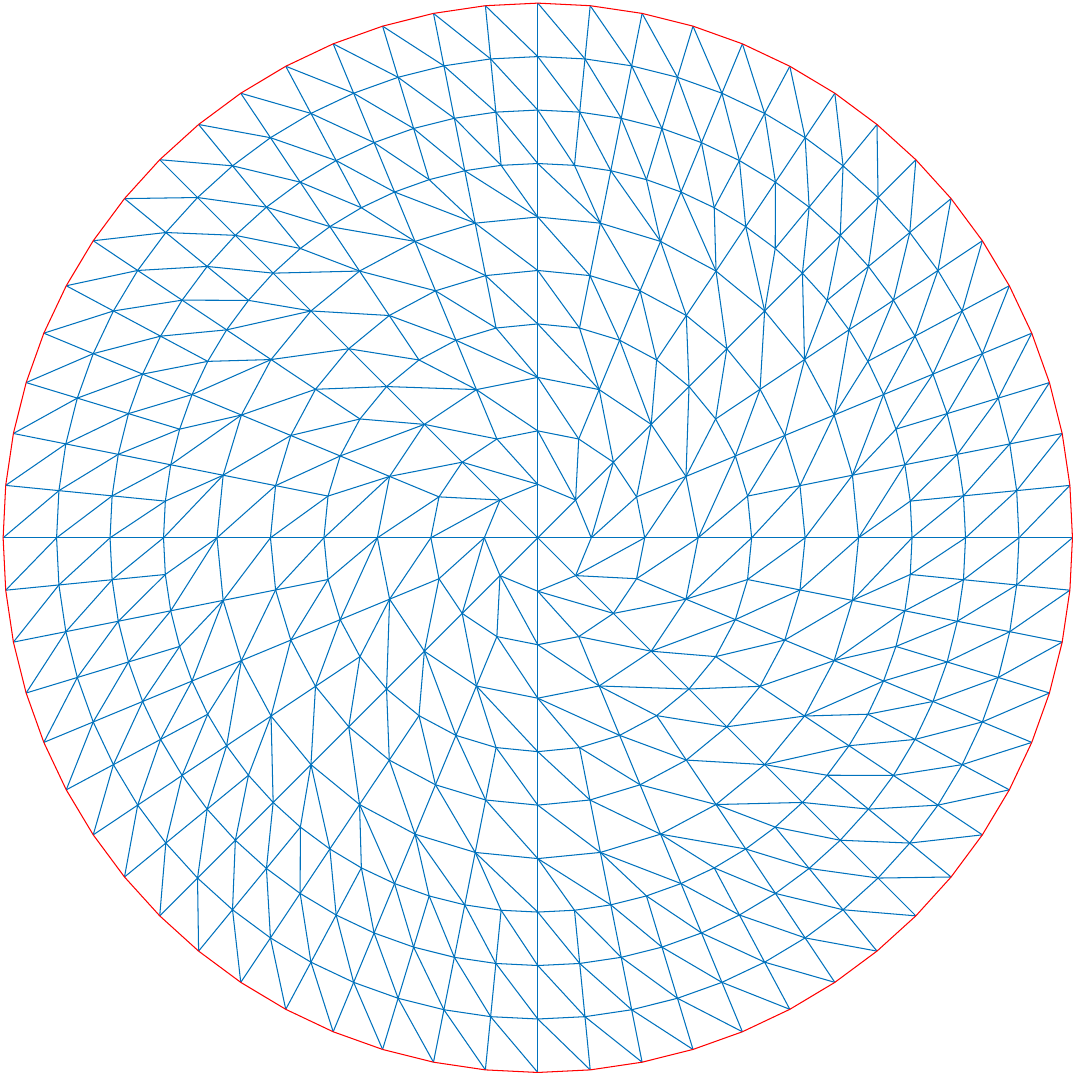}
     \hfill
    \includegraphics[width=0.45\linewidth]{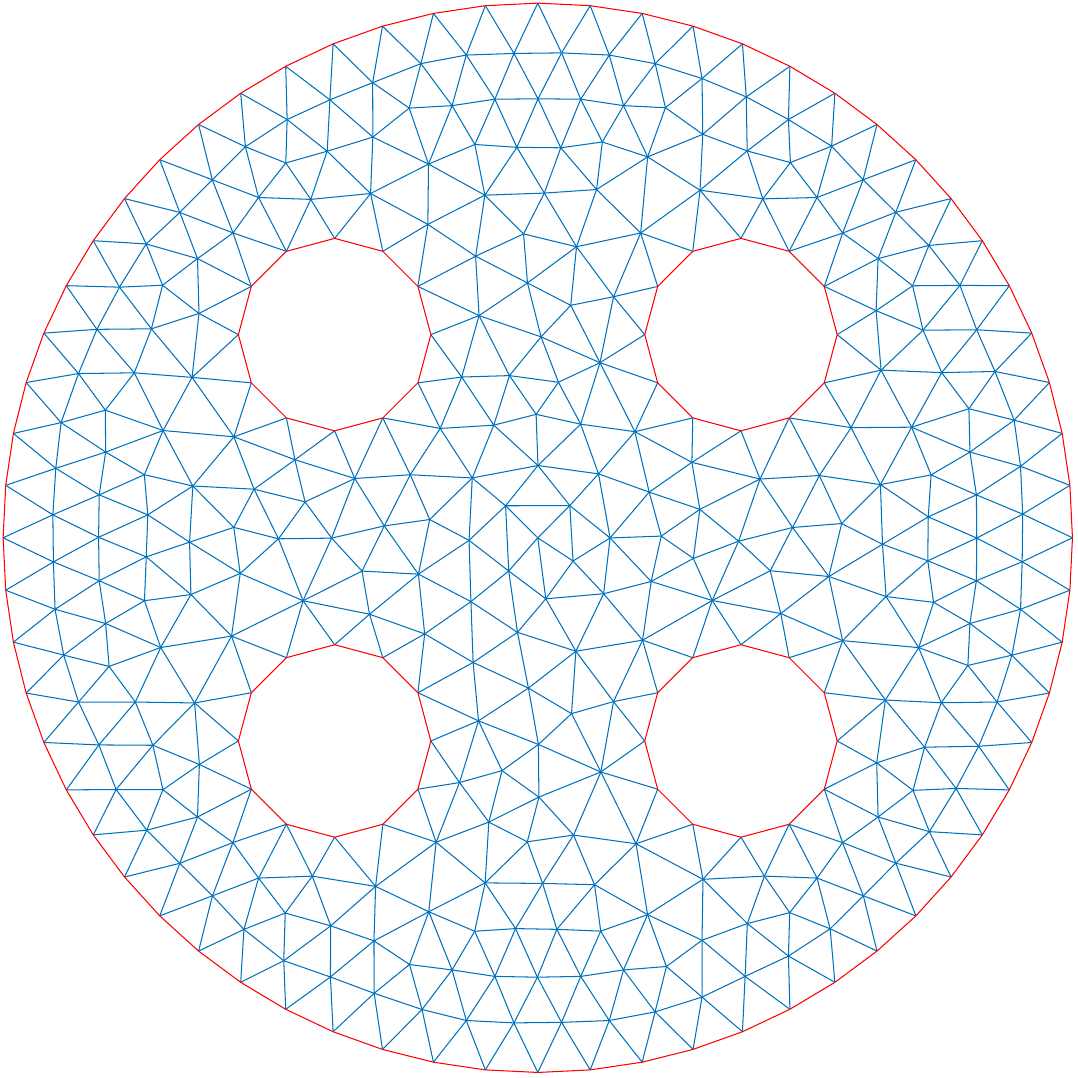}
   % \hfill
   % \includegraphics[width=0.45\linewidth]{disk_mesh.png}
    \caption{
Examples of triangular finite element meshes.
Left: custom partially rotationally structured mesh for the unit disk,
designed to better represent circular level sets.
Right: computational mesh for the multiply connected domain $\Omega$
with four circular holes.
} 
    %Left: multiply connected domain with four holes. Right: unit disk mesh aligned with the expected dead-core boundary.}
    \label{fig:fourholes_mesh}
\end{figure}

The diameter of an element $K\in\mathcal{T}_h$ is denoted by $h_K$, and the global mesh-size parameter is defined by
\[
h := \max_{K\in\mathcal{T}_h} h_K.
\]
 Let $\Ppol_1(K)$ denote the space of linear polynomials defined on the cell $K$. We define by 
\[
V_h=\{v_h\in C(\overline{\Omega})\,:\, v_h\arrowvert_K \in \Ppol_1(K)\quad\forall K\in {\mathcal{T}}_h\}
\] 
the space of piecewise linears with respect to $\{\mathcal{T}_h\}$, and we set $V_{h,0}=V_h\cap H^1_0(\Omega)$ for the discrete test space. Furthermore, we denote by $\varphi_i$ the nodal piecewise linear basis function associated with the $i$-th vertex in the decomposition $\{{\mathcal{T}}_h\}$. 

The finite element discretization of \eqref{weak_one_comp} 
reads as follows:\\[1.5ex] 
\hspace*{1cm} Find $u_h\in V_h$ with $u_h\arrowvert_\Gamma=1$ such that
\begin{equation}\label{Gal_stat}
(\nabla u_h, \nabla v_h) + \varphi^2\left< f(u_h), v_h\right>^h = 0\quad\text{for all}\quad v_h\in V_{h,0}\,,
\end{equation}
%The optimal error estimate $|u-u_h|_1\le C(u,\varphi,n)\,h$ has been derived in \cite{Barrett}.
where
\begin{equation}
  \left< f(u_h), v_h\right>^h=\sum\limits_{j=1}^{N_{VT}} \frac{|\Omega_j|}{3}f\bigl(u_h(a_j)\bigr) v_h(a_j)\,.
\end{equation}
Here, $N_{VT}$ denotes the total number of vertices in the decomposition $\{{\mathcal{T}}_h\}$, and
\[
\overline{\Omega}_i=\bigcup\limits_{K\in {\mathcal{T}}_h(a_i)} \overline{K}
\]
is a patch  that consists of all mesh cells belonging to the support of $\varphi_i$.
In Eq.~\eqref{Gal_stat}, the nonlinear term $\left< f(u_h), v_h\right>^h\approx \left( f(u_h), v_h\right)$ results from the mass lumping based on the following quadrature
\begin{equation*}
\int\limits_K w(x)\, dx \approx \frac{|K|}{3}\sum\limits_{\ell=1}^3 w(a_{K,\ell})\,,
\end{equation*}
where $\{a_{K,\ell}\}_{\ell=1}^3$ are the vertices of the triangular mesh cell $K\in\{\mathcal{T}_h\}$ with its area $|K|$.

\subsection{Time-marching approach}

The time-marching towards the steady-state solution is performed using the equidistant time points $ 0=t_0<t_1<\ldots <t_N=T$ where $T$ is chosen sufficiently large. We denote by $\tau=T/N$ the uniform time step size. Let $u^0_h\equiv 1$, and  let 
\[
u^k_h(x)=\sum\limits_{j=1}^{N_{VT}} u_j^k\varphi_j(x)\approx \tilde{u}(t_k,x)
\]
for $k\ge 1$ be the finite
element approximation from the previous time. Furthermore, we introduce the nodal vector with components associated with the interior vertices as
\[
\underline{u}^k\approx [u_1(t_k),\ldots, u_{N_F}(t_k)]^T
\]
where $N_F=N_{VT}-N_D$ and $N_D$ denotes the number of boundary vertices.

We propose the following implicit-explicit (IMEX) time-marching scheme for the computation of the nodal vector $\underline{u}^{k+1}\in\R^{N_F}$:
\begin{equation}\label{time_marching_scheme}
\left[
M^{FF}
+\tau A^{FF}
\right]
\underline{u}^{k+1}
=
M^{FF}\underline{u}^k
-
\tau M^{FF}\underline{f}(\underline{u}^k)
-
\tau A^{FD}\underline{u}^D
-
M^{FD}\underline{u}^D.
\end{equation}
Here, the vectors are defined by
\begin{equation*}
\begin{split}
\underline{f}(\underline{u})
&=
[f(u_1),\ldots,f(u_{N_F})]^T
\in\R^{N_F},
\\
\underline{u}^D
&=
[1,\ldots,1]^T
\in\R^{N_D},
\end{split}
\end{equation*}
while the stiffness and mass matrices are given by
\begin{equation*}
\begin{split}
A^{FF}_{ij}
&=
(\nabla\varphi_j,\nabla\varphi_i),
\qquad
M^{FF}_{ij}
=
\langle\varphi_j,\varphi_i\rangle^h,
\qquad
i,j=1,\ldots,N_F,
\\
A^{FD}_{ij}
&=
(\nabla\varphi_j,\nabla\varphi_i),
\qquad
M^{FD}_{ij}
=
\langle\varphi_j,\varphi_i\rangle^h,
\qquad
i=1,\ldots,N_F,
\quad
j=1,\ldots,N_D.
\end{split}
\end{equation*}
For practical computations, the matrix on the left-hand side remains constant throughout the iteration process, and the resulting linear systems are solved efficiently using a precomputed Cholesky factorization.
After every iteration step, we apply the positivity projection
\[
\underline{u}^{k+1}
\leftarrow
\max\{\underline{u}^{k+1},0\}.
\]
Additionally, to improve robustness and enforce monotone energy decay, the tentative iterate is damped according to
\[
\underline{u}^{k+1}
\leftarrow
\underline{u}^k
+
\beta
\left(
\underline{u}^{k+1}
-
\underline{u}^k
\right),
\]

where the damping parameter $\beta\in(0,1]$ is determined by a simple backtracking line-search procedure ensuring
\[
E_h(u_h^{k+1})
\le
E_h(u_h^k).
\]
The discrete energy functional used in the damping procedure is given by
\[
E_h(u_h)
=
\frac12\,\underline{u}_F^T A^{FF}\underline{u}_F
+
\frac{\varphi^2}{p+1}
\sum_{K\in\mathcal{T}_h}
\frac{|K|}{3}
\sum_{\ell=1}^3
\max\!\left(u_h(x_{K,\ell}),0\right)^{p+1},
\]
where $|K|$ denotes the area of the triangle $K$, and
$x_{K,\ell}$, $\ell=1,2,3$, are the quadrature points associated with the second-order quadrature rule on $K$.

Let $\Omega_h=\bigcup\limits_{K\in {\cal{T}}_h} \overline{K}$. The error bound for the piece-wise linear finite element solution of \eqref{Gal_stat} is given by \cite[Theorem 3.1]{barrett_1991}
\begin{equation}\label{fem_error}
|u-u_h|_{1,\Omega_h}\le Ch\,,
\end{equation}
provided that $h$ is sufficiently small and certain non-degeneracy properties of the solution $u$ hold.
%===================================================================================================================

\section{Numerical results}
\label{sec:results}

We compute dead-core solutions using the IMEX time-marching scheme
\eqref{time_marching_scheme} for several computational domains and a
sequence of nested uniform meshes with decreasing mesh size $h$. We
consider five computational meshes with
\[
h \in \{0.2,\ 0.1,\ 0.05,\ 0.025,\ 0.0125\},
\]
and use the corresponding time step
\[
\tau=h^2.
\]
For each mesh size, the number of time steps is chosen as
\[
N_{\mathrm{steps}}=\frac{40}{h},
\]
giving
\[
N_{\mathrm{steps}}\in
\{200,\ 400,\ 800,\ 1600,\ 3200\}.
\]
The number of time steps is chosen empirically so that the discrete
energy has stabilized and no visible evolution of the numerical
solution is observed.

\subsection{Benchmark problem on the unit disk}
\label{sec:benchmark_unit_disk}

The unit-disk problem provides a benchmark for assessing the accuracy
of the proposed finite element approximation in a setting where a
high-accuracy reference solution is available.
The dead zone is a single connected circular region bounded by one
circular interface, as shown in Figure~\ref{fig:disk_deadcore}.
This structure follows from the radial symmetry of the solution.

\begin{figure}[H]
\centering
\begin{minipage}[t]{0.47\linewidth}
    \centering
    \includegraphics[width=\linewidth]{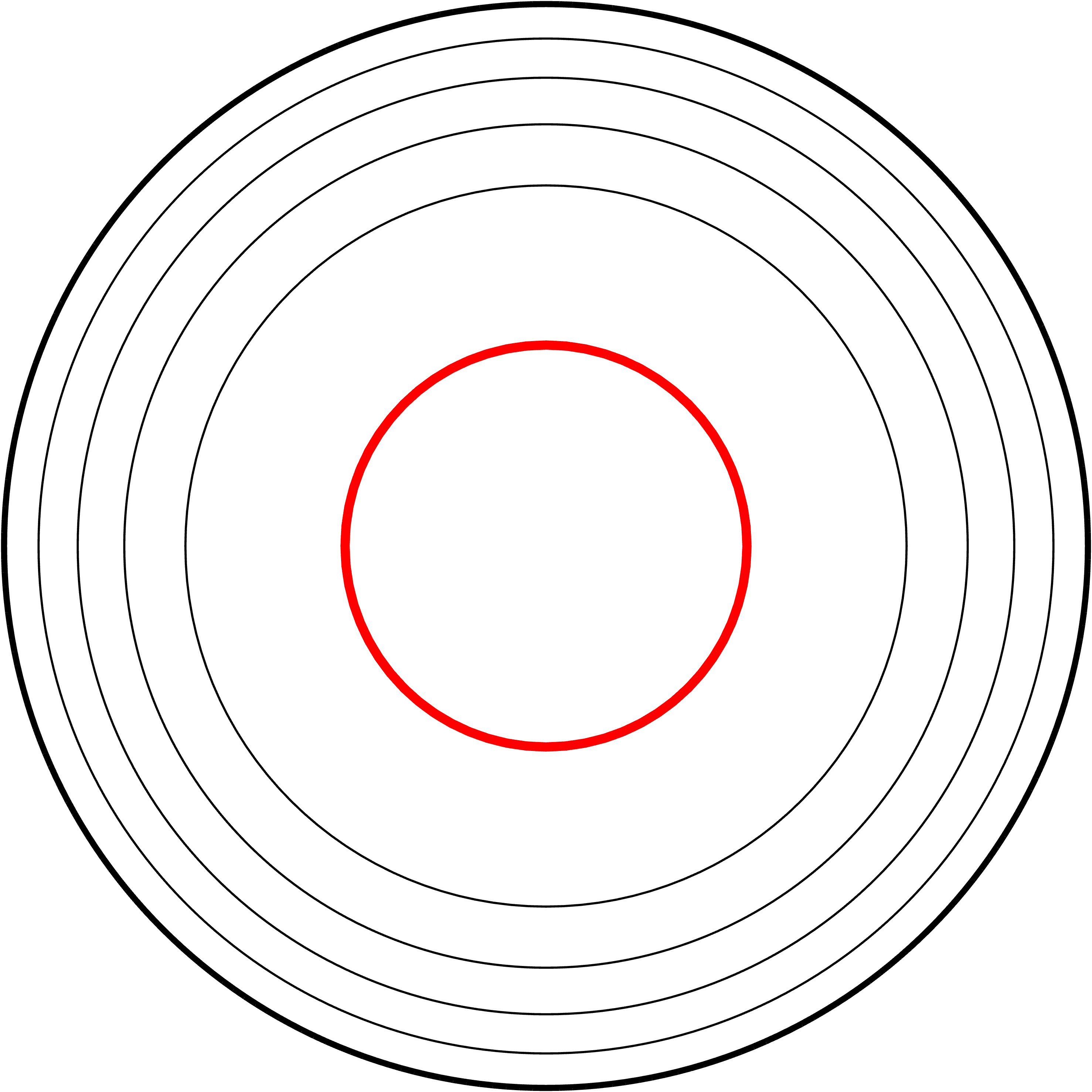}
\end{minipage}
\hfill
\begin{minipage}[t]{0.47\linewidth}
    \centering
    \includegraphics[width=\linewidth]{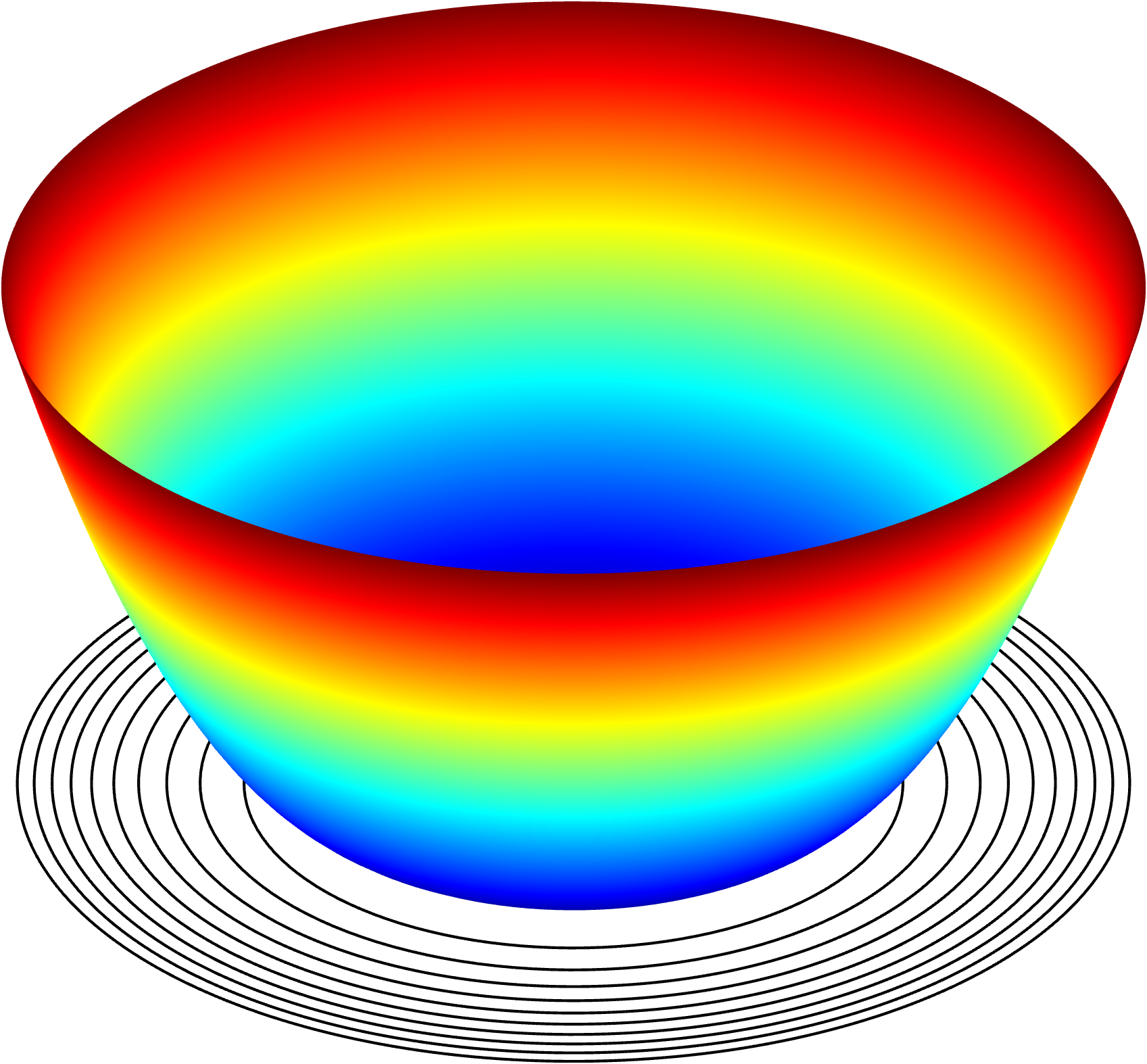}
\end{minipage}
\caption{Finite element approximation of the benchmark problem on the
unit disk for $\varphi=3$ and $p=0.1$. Left: contour plot of the
numerical solution with the reconstructed dead-core boundary using
$\alpha=10^{-5}$, shown in red. Right: three-dimensional visualization
of the numerical solution.}
\label{fig:disk_deadcore}
\end{figure}

Owing to this symmetry, the two-dimensional problem admits a
one-dimensional radial representation. This radial problem is used
to construct the high-accuracy reference solution and to evaluate
the numerical errors of the finite element approximation.

\subsubsection{High-accuracy radial reference solution}
\label{sec:radial_reference}

For the unit disk,
\(
\Omega=\{(x,y)\in\mathbb{R}^2:x^2+y^2<1\},
\)
the radially symmetric solution is written as
\[
u(x,y)=v(r),
\qquad
r=\sqrt{x^2+y^2}.
\]
The radial problem~\eqref{deadcore_1d} is characterized by an unknown
dead-core radius \(r_0\). The solution satisfies
\[
v(r)=0
\qquad\text{for}\qquad
0\le r\le r_0,
\]
while in the positive region \(r_0<r<1\),
\[
v''(r)+\frac{1}{r}v'(r)
=
\varphi^2[v(r)]^p.
\]
At the dead-core boundary and at the outer boundary,
\[
v(r_0)=0,
\qquad
v'(r_0)=0,
\qquad
v(1)=1.
\]

The unknown dead-core radius \(r_0\) was determined numerically by a
shooting procedure combined with bisection. For each trial value of
\(r_0\), the positive branch of the radial problem was integrated from
a point slightly outside the interface to \(r=1\), using the local
dead-core asymptotic behaviour to initialize the integration. The
resulting value of \(v(1)\) was compared with the prescribed boundary
value, and the interval containing the admissible value of \(r_0\)
was successively reduced.

After determining \(r_0\), the radial profile was reconstructed on a
fine one-dimensional grid, with \(v(r)=0\) inside the dead core. The
resulting profile was subsequently interpolated to the finite element
nodes to construct the reference solution on the two-dimensional
unit disk.

The reference energy was evaluated from the energy functional
\eqref{deadcore_energy}. By radial symmetry, the corresponding
two-dimensional integral reduces to
\[
E_{\mathrm{ref}}
=
2\pi\int_0^1
\left[
\frac{1}{2}\bigl(v'(r)\bigr)^2
+
\frac{\varphi^2}{p+1}[v(r)]_+^{p+1}
\right]r\,\mathrm{d}r.
\]

\begin{remark}
The benchmark problem considered throughout this section corresponds to
\[
\varphi=3,
\qquad
p=0.1.
\]
Its high-accuracy radial reference solution is shown in
Figure~\ref{fig:benchmark_radial_profile}. To assess the sensitivity
to the reaction exponent, additional values of $p$ are considered.
Here, $A_d/A_\Omega$ denotes the fraction of the reactor domain occupied
by the dead zone. The corresponding reference dead-core radii, dead-core
area fractions, and total energies are summarized in
Table~\ref{tab:p_sensitivity}. The first row corresponds to the
benchmark case used for the finite element validation.
\end{remark}

\begin{figure}[ht]
    \centering
     \includegraphics[width=0.82\textwidth]{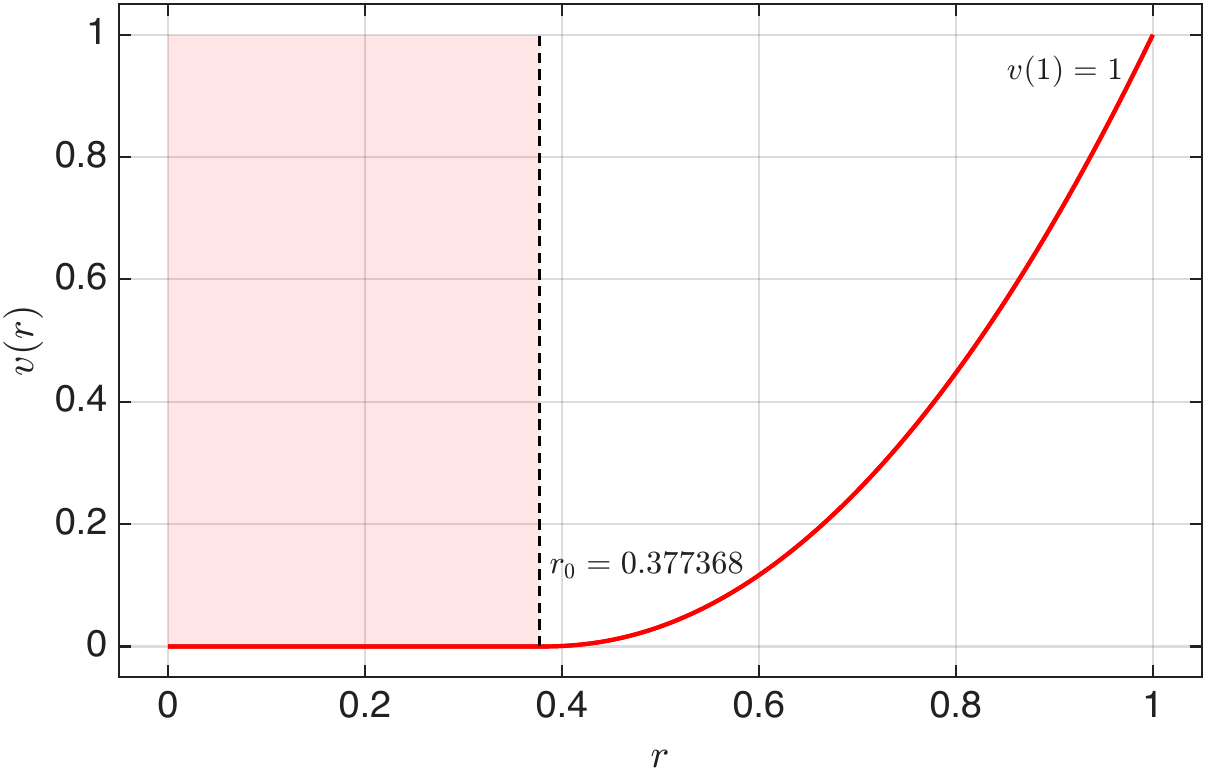}
    \caption{Radial reference solution for the unit-disk benchmark with
    $\varphi=3$ and $p=0.1$. The resulting reference dead-core radius is
    $r_0=0.377368$. The solution is zero for $0\le r\le r_0$ and
    satisfies $v(1)=1$.}
    \label{fig:benchmark_radial_profile}
\end{figure}

\begin{table}[ht]
\centering
\setlength{\tabcolsep}{8pt}
\renewcommand{\arraystretch}{1.15}
\begin{tabular}{c c c c c}
\hline
$p$ & $\varphi$ & $r_0$ & $A_d/A_\Omega$ & $E$ \\
\hline
0.1 & 3 & 0.3773683824 & 0.1424068960 & 14.1953655088 \\
0.2 & 3 & 0.2579839639 & 0.0665557256 & 13.0660678475 \\
0.4 & 7 & 0.5836165830 & 0.3406083159 & 29.0110978949 \\
0.7 & 7 & 0.0589755368 & 0.0034781139 & 24.0236449642 \\
\hline
\end{tabular}
\caption{Reference dead-core radius, dead-core area fraction, and
total energy for different reaction exponents $p$ in the unit-disk
problem. The first row corresponds to the benchmark case
$\varphi=3$, $p=0.1$.}
\label{tab:p_sensitivity}
\end{table}

\subsubsection{Threshold sensitivity of the reference interface}
\label{sec:threshold_sensitivity}

The dead-core boundary is subsequently reconstructed from the numerical
solution by means of a positive detection threshold. To separate the
effect of this post-processing parameter from the finite element
discretization error, we first investigate its influence directly on
the radial reference solution.

Let \(r_\alpha\) denote the radius at which the reference profile
satisfies
\[
v(r_\alpha)=\alpha,
\qquad
\alpha>0.
\]
The dead-core boundary of the reference solution is located at
\(r=r_0\), where
\(
v(r_0)=v'(r_0)=0.
\)
In the positive region, the radial equation is
\[
v''+\frac{1}{r}v'=\varphi^2v^p.
\]
Near the interface, let \(s=r-r_0\) and assume
\[
v(r)\sim C_0s^\gamma,
\qquad s\to0^+.
\]
Since \(r_0>0\), the term \(r^{-1}v'\) is of lower order than
\(v''\). Balancing the leading-order terms gives
\[
C_0\gamma(\gamma-1)s^{\gamma-2}
\sim
\varphi^2 C_0^p s^{\gamma p},
\]
and hence
\[
\gamma-2=\gamma p,
\qquad
\gamma=\frac{2}{1-p}.
\]
Thus,
\[
v(r)\sim C_0(r-r_0)^{2/(1-p)}
\qquad\text{as }r\to r_0^+.
\]
Consequently, imposing \(v(r_\alpha)=\alpha\) gives
\(
r_\alpha-r_0
\sim
C\,\alpha^{(1-p)/2},
\)
and therefore
\begin{equation}
\left|r_\alpha-r_0\right|
=
\mathcal{O}\!\left(
\alpha^{(1-p)/2}
\right).
\label{eq:threshold_error}
\end{equation}

It is convenient to denote the exponent in this scaling by
\(\beta\), so that
\begin{equation}
e_\alpha:=\left|r_\alpha-r_0\right|
\sim C\alpha^\beta,
\qquad
\beta=\frac{1-p}{2}.
\label{eq:threshold_scaling}
\end{equation}
The exponent \(\beta\) in \eqref{eq:threshold_scaling}
characterizes the sensitivity of the reconstructed interface to the
detection threshold, while \(\beta_{\mathrm{fit}}\) is obtained from a
log--log fit of \(e_\alpha\) against \(\alpha\).

The threshold sensitivity is evaluated directly from the
one-dimensional radial reference solutions. Table~\ref{tab:threshold_benchmark}
lists \(r_0\), the interface errors \(e_\alpha\) for three representative
thresholds, and the fitted and theoretical exponents.

\begin{table}[ht]
\centering
\setlength{\tabcolsep}{7pt}
\renewcommand{\arraystretch}{1.15}
\begin{tabular}{c c c c c c c c}
\hline
$p$ & $\varphi$ & $r_0$ &
$e_{10^{-3}}$ & $e_{10^{-5}}$ & $e_{10^{-7}}$ &
$\beta_{\mathrm{fit}}$ & $\beta_{\mathrm{theory}}$ \\
\hline
0.1 & 3 & 0.377368 &
$2.48\times10^{-2}$ & $3.09\times10^{-3}$ & --- &
0.453 & 0.450 \\
0.2 & 3 & 0.257984 &
$4.15\times10^{-2}$ & $6.47\times10^{-3}$ & $1.02\times10^{-3}$ &
0.404 & 0.400 \\
0.4 & 7 & 0.583617 &
$5.05\times10^{-2}$ & $1.26\times10^{-2}$ & $3.16\times10^{-3}$ &
0.301 & 0.300 \\
0.7 & 7 & 0.058976 &
$3.31\times10^{-1}$ & $1.65\times10^{-1}$ & $8.25\times10^{-2}$ &
0.149 & 0.150 \\
\hline
\end{tabular}
\caption{Threshold sensitivity of the dead-core radius evaluated
directly from the one-dimensional radial reference solution for
the unit-disk problem. Here, \(e_\alpha=|r_\alpha-r_0|\),
\(\beta_{\mathrm{fit}}\) is obtained from a log--log fit of
\(e_\alpha\) against \(\alpha\), and
\(\beta_{\mathrm{theory}}=(1-p)/2\). The symbol ``---'' indicates
that the corresponding threshold value was not resolved by the
available numerical sampling of the reference profile.}
\label{tab:threshold_benchmark}
\end{table}

The fitted exponents agree closely with the theoretical values given
by Equation~\eqref{eq:threshold_scaling} for all four parameter sets.
As \(p\) increases, the exponent \(\beta\) decreases, indicating a
slower reduction of the interface error with decreasing threshold.
Thus, for larger values of \(p\), smaller detection thresholds may be
required to achieve a comparable interface accuracy.

For \(p=0.1\) and \(\varphi=3\), the value \(e_{10^{-7}}\) is not
reported because the available numerical sampling of the reference
profile does not resolve the solution sufficiently close to the
dead-core boundary at this threshold. The symbol ``---'' therefore
denotes an unresolved value rather than a zero error.

This one-dimensional study provides a reference for the subsequent
finite element calculations. Since the threshold represents a
post-processing parameter rather than a parameter of the underlying
continuous problem, it is kept fixed across the mesh-refinement levels
in the absence of an independently known interface.

\subsubsection{FEM convergence}

We first assess the convergence of the finite element solution under
successive mesh refinement. Table~\ref{tab:fem-convergence} reports
the energy and the $L^2$ and $H^1$ errors with respect to the radial
reference solution. The $L^2$ error measures the overall magnitude of
the difference between the numerical and reference solution,
whereas the $H^1$-seminorm error measures the difference in their
spatial gradients. Thus, the former primarily assesses the accuracy
of the solution values, while the latter assesses how accurately their
spatial variation is reproduced. In particular,
\[
\|e\|_{L^2}
=
\left(\int_\Omega |u_h-u|^2\,\mathrm{d}x\right)^{1/2},
\qquad
|e|_{H^1}
=
\left(\int_\Omega |\nabla u_h-\nabla u|^2\,\mathrm{d}x\right)^{1/2},
\qquad e=u_h-u.
\]

The error norms are evaluated elementwise using a seven-point Dunavant
quadrature rule, which is exact for polynomial integrands of degree up
to five. This higher-order quadrature provides accurate numerical
integration of the error measures independently of the quadrature used
in the finite element discretization.

\begin{table}
\centering
\setlength{\tabcolsep}{8pt}
\renewcommand{\arraystretch}{1.15}
\begin{tabular}{c c r c c c c}
\hline
level & $h_{\max}$ & $N_T$
& $E_h$ & $|E_h-E_{\mathrm{ref}}|$
& $\|e\|_{L^2}$ & $|e|_{H^1}$ \\
\hline
0 & 0.2000 & 176
  & 14.3153812073 & $1.20016\mathrm{e}{-1}$
  & $4.62231\mathrm{e}{-2}$ & $5.81221\mathrm{e}{-1}$ \\
1 & 0.1000 & 720
  & 14.2226052464 & $2.72397\mathrm{e}{-2}$
  & $6.15071\mathrm{e}{-3}$ & $2.84900\mathrm{e}{-1}$ \\
2 & 0.0500 & 2880
  & 14.2019003926 & $6.53488\mathrm{e}{-3}$
  & $1.54384\mathrm{e}{-3}$ & $1.40515\mathrm{e}{-1}$ \\
3 & 0.0250 & 11584
  & 14.1969881999 & $1.62269\mathrm{e}{-3}$
  & $5.86076\mathrm{e}{-4}$ & $7.00389\mathrm{e}{-2}$ \\
4 & 0.0125 & 46464
  & 14.1957658042 & $4.00295\mathrm{e}{-4}$
  & $1.13631\mathrm{e}{-4}$ & $3.48378\mathrm{e}{-2}$ \\
\hline
Reference & --- & ---
  & 14.1953655088 & --- & --- & --- \\
\hline
\end{tabular}
\caption{FEM convergence for the benchmark problem on the unit disk
with $\varphi=3$ and $p=0.1$. Here, $N_T$ denotes the number of triangular elements,
$E_h$ is the discrete energy, and $e=u_h-u$ denotes the error with
respect to the high-accuracy radial reference solution.}
\label{tab:fem-convergence}
\end{table}

The computed energy approaches the reference value
$E_{\mathrm{ref}}=14.1953655088$ under mesh refinement, with an
observed convergence rate of approximately second order. The
$H^1$-seminorm error converges at approximately first order, as
expected for linear finite elements. The $L^2$ error decreases under
mesh refinement, with observed rates approaching second order on the
finer levels.

\subsubsection{Dead-core radius and interface reconstruction}

We now examine the reconstruction of the dead-core interface from the
finite element solution. The interface is identified as the level set
\(
u=\alpha,
\)
where $\alpha>0$ is used only in the post-processing step. To assess
the sensitivity of the reconstruction, the same three absolute
thresholds,
\[
\alpha\in\{10^{-3},10^{-4},10^{-5}\},
\]
are applied at every mesh level. No threshold is selected by comparison
with the reference interface.

\begin{table}
\centering
\setlength{\tabcolsep}{8pt}
\renewcommand{\arraystretch}{1.15}
\begin{tabular}{c c c c c}
\hline
level & $h_{\max}$ &
$r_h(\alpha=10^{-3})$ &
$r_h(\alpha=10^{-4})$ &
$r_h(\alpha=10^{-5})$ \\
\hline
0 & 0.2000 & 0.396500 & 0.394925 & 0.394585 \\
1 & 0.1000 & 0.402489 & 0.399060 & 0.398478 \\
2 & 0.0500 & 0.404676 & 0.399876 & 0.359734 \\
3 & 0.0250 & 0.403482 & 0.379194 & 0.370564 \\
4 & 0.0125 & 0.401987 & 0.386128 & 0.376096 \\
\hline
Reference & --- &
\multicolumn{3}{c}{$0.377368$} \\
\hline
\end{tabular}
\caption{Reconstructed dead-core radius for the unit-disk benchmark
using fixed absolute threshold values
$\alpha=10^{-3}$, $10^{-4}$, and $10^{-5}$.}
\label{tab:dead-core-alpha}
\end{table}

Table~\ref{tab:dead-core-alpha} demonstrates the dependence of the
reconstructed interface on the threshold parameter. The largest
threshold produces a persistent bias in the reconstructed interface,
whereas smaller thresholds give closer agreement with the reference
solution on the finer meshes.

The results for the smallest threshold also reveal an interaction
between threshold selection and spatial resolution. In particular, the
reconstructed radius is not strictly monotone with mesh refinement.
This behavior indicates that very small threshold values are more
sensitive to the numerical resolution of the computed solution in the
neighborhood of the dead-core boundary.

Table~\ref{tab:dead-core-accuracy} shows the corresponding accuracy of
the interface and dead-core area reconstruction. For the unit disk, the reconstructed dead-core area fraction is
obtained directly from the reconstructed radius as
\(
\frac{A_{d,h}}{A_\Omega}=r_h^2.
\)
The radius and area
fraction both approach the reference values under refinement, with
the finest mesh providing a close approximation of the dead-core
geometry.

\begin{table}[H]
\centering
\setlength{\tabcolsep}{9pt}
\renewcommand{\arraystretch}{1.15}
\begin{tabular}{c c c c c c}
\hline
level & $h_{\max}$ & $r_h$ &
$|r_h-r_0|$ &
$A_{d,h}/A_\Omega$ &
$\left|A_{d,h}/A_\Omega-A_d/A_\Omega\right|$ \\
\hline
0 & 0.2000 & 0.394585 & 1.722e-2 & 0.155697 & 1.329e-2 \\
1 & 0.1000 & 0.398478 & 2.111e-2 & 0.158784 & 1.638e-2 \\
2 & 0.0500 & 0.359734 & 1.763e-2 & 0.129408 & 1.300e-2 \\
3 & 0.0250 & 0.370564 & 6.804e-3 & 0.137318 & 5.089e-3 \\
4 & 0.0125 & 0.376096 & 1.272e-3 & 0.141448 & 9.588e-4 \\
\hline
Reference & --- & 0.377368 & --- & 0.142407 & --- \\
\hline
\end{tabular}
\caption{Accuracy of the reconstructed dead-core radius and area
fraction for the unit-disk benchmark using the fixed threshold
$\alpha=10^{-5}$. For the unit disk,
$A_{d,h}/A_\Omega=r_h^2$.}
\label{tab:dead-core-accuracy}
\end{table}

Overall, the results confirm that the threshold parameter has a
significant influence on the reconstructed free boundary. It should
therefore be regarded as a post-processing parameter whose choice
needs to be considered together with the spatial resolution. The
results also indicate that sufficiently fine spatial resolution is
important when small threshold values are used for interface
reconstruction.

\subsection{Unit disk with a concentric hole}

As a modification of the benchmark problem, we consider a unit disk
containing a concentric circular hole of radius \(1/4\). 

\begin{figure}[H]
\centering
\begin{minipage}[t]{0.47\linewidth}
    \centering
    \includegraphics[width=\linewidth]{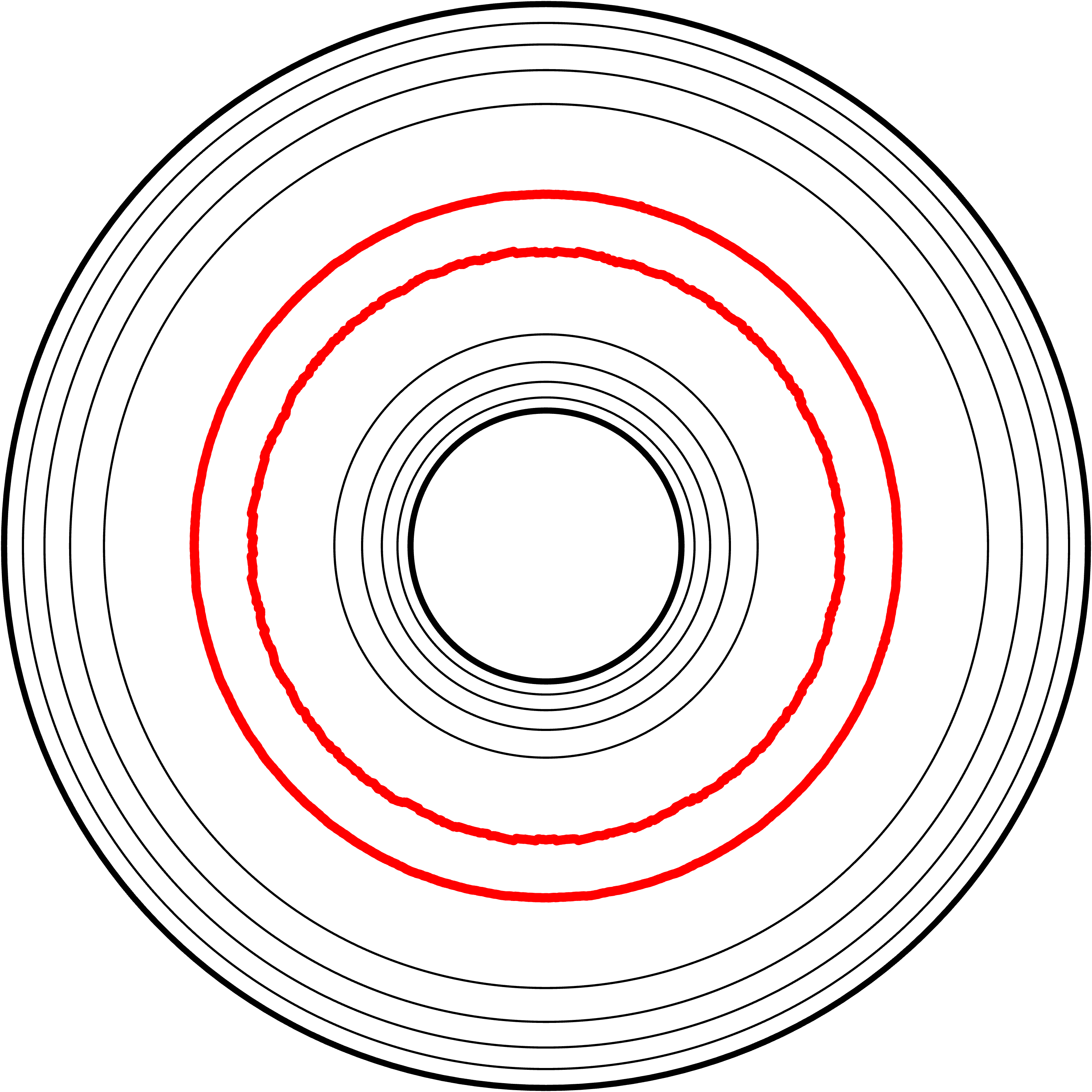}
\end{minipage}
\hfill
\begin{minipage}[t]{0.47\linewidth}
    \centering
    \includegraphics[width=\linewidth]{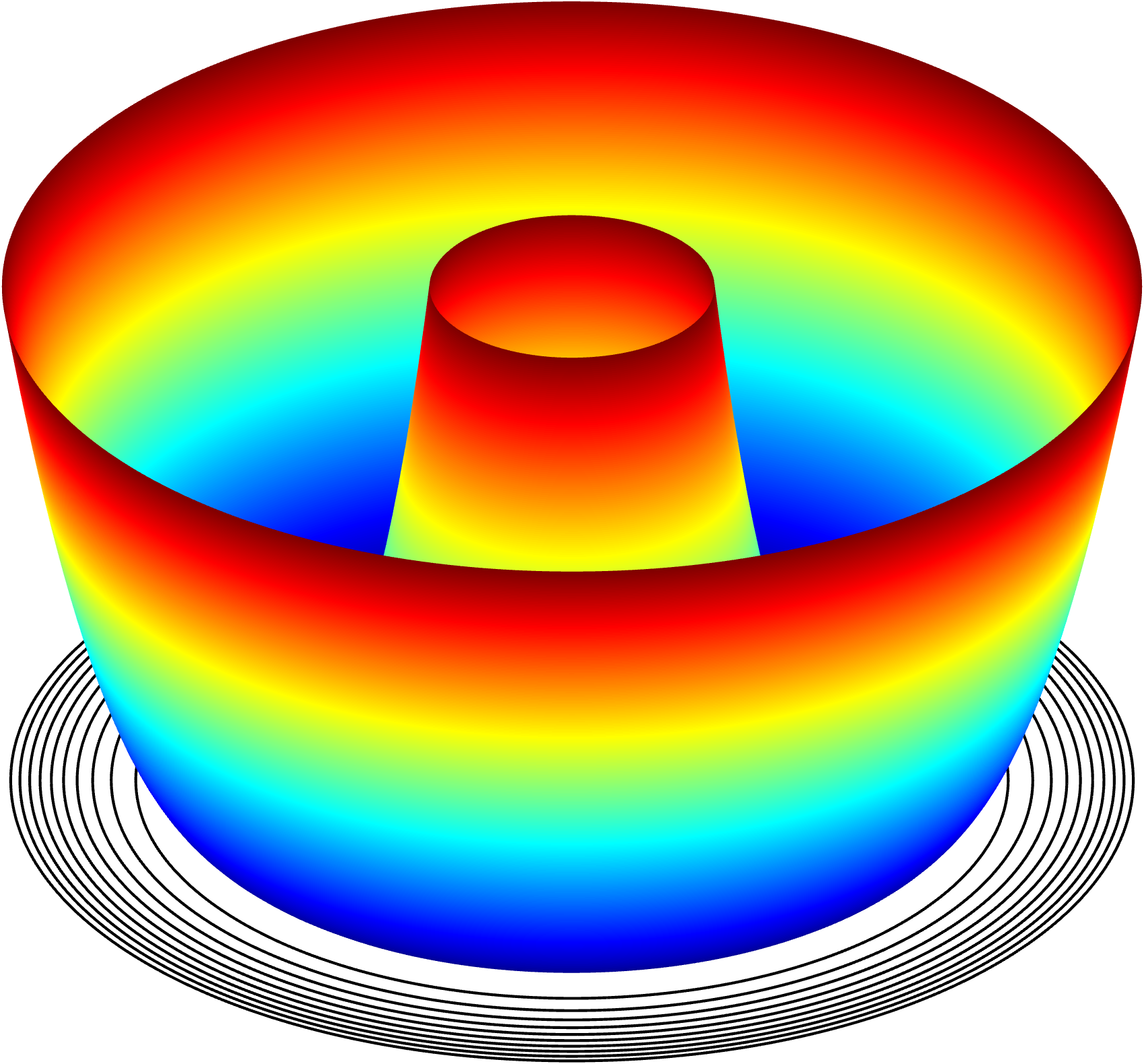}
\end{minipage}
\caption{Finite element approximation on the disk with a concentric hole for
$\varphi=5$ and $p=0.1$. Left: contour plot with the reconstructed
dead-core interface using $\alpha=10^{-5}$, shown in red. Right:
three-dimensional visualization of the numerical solution.}
\label{fig:diskhole_deadcore}
\end{figure}

The dead zone consists of two concentric circular interfaces, as shown
in Figure~\ref{fig:diskhole_deadcore}, and no isolated dead-zone
islands occur. This follows from radial symmetry of the solution.

\begin{lemma}
Let
\[
\Omega=\left\{(x,y)\in\mathbb R^2:
r_1<\sqrt{x^2+y^2}<1\right\},
\qquad 0<r_1<1.
\]
Then the dead-zone is necessarily a concentric annulus.
\end{lemma}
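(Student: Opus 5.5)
Since the weak solution of \eqref{reactionvolume} is unique, the plan is to exploit rotational invariance, reduce to the radial ODE on $(r_1,1)$, and then show that the zero set of the radial profile is a closed interval $[a,b]$ with $r_1<a\le b<1$ (or empty). The dead zone $\{u=0\}$ is then the closed annulus $\{a\le r\le b\}$, which is concentric with $\Omega$.

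\textbf{Step 1 (radial symmetry).} Let $R_\theta$ be any rotation of $\R^2$. Then $\Omega$ is invariant under $R_\theta$. The Laplacian commutes with rotations, and the boundary datum $u=1$ is rotation invariant. Hence $u\circ R_\theta$ is again a weak solution of \eqref{weak_one_comp} (equivalently, a minimizer of the convex functional \eqref{deadcore_energy}). By uniqueness, $u\circ R_\theta=u$, so $u(x)=v(r)$ with $v\in C^1([r_1,1])$. The profile satisfies
\[
-(r v')'+\varphi^2 r\,[v]_+^p=0 \quad\text{in } (r_1,1),\qquad v(r_1)=v(1)=1 ,
\]
and, by \eqref{eq_max_principle}, $0\le v<1$ in the interior. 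It follows that $\Omega_{dz}=\{x:\ v(|x|)=0\}$, so everything reduces to describing $Z:=\{r\in(r_1,1):v(r)=0\}$.

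\textbf{Step 2 ($Z$ is an interval).} The equation gives $(r v')'=\varphi^2 r[v]_+^p\ge 0$, so $r\mapsto r v'(r)$ is nondecreasing on $(r_1,1)$. Suppose $v(s_1)=v(s_2)=0$ with $s_1<s_2$, and suppose $v>0$ somewhere in between. Then $v$ attains a positive interior maximum at some $m\in(s_1,s_2)$, so $v'(m)=0$. Since $r v'$ is nondecreasing, $v'\ge 0$ on $[m,s_2]$, and hence $v(s_2)\ge v(m)>0$, a contradiction. Therefore $v\equiv 0$ on $[s_1,s_2]$. This shows that $Z$ is connected. It is closed in $(r_1,1)$ by continuity, and it stays away from both endpoints because $v(r_1)=v(1)=1$. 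Consequently $Z=[a,b]$ with $r_1<a\le b<1$, or $Z=\emptyset$.

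\textbf{Step 3 (conclusion).} If $Z=\emptyset$ there is no dead zone. Otherwise $\Omega_{dz}=\{a\le |x|\le b\}$, a single concentric annulus, which degenerates to a circle when $a=b$. In particular, no isolated islands and no sectorial pieces can occur.

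The main subtlety is Step 1. Reducing to the radial equation requires uniqueness of the weak solution on the annulus, and the cited uniqueness results from \cite{aziz_1988,bandle84} supply it. An alternative route to the same conclusion is strict convexity of $E$ in the gradient term together with the fixed boundary trace. One must also know that $v$ is regular enough for the monotonicity argument in Step 2. Here I will use $u\in C^1(\Omega)$ to justify that $r v'$ is absolutely continuous and nondecreasing, integrating the weak radial equation against radial test functions if necessary.
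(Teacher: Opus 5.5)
Your proof is correct and follows essentially the same route as the paper. Both reduce to the radial ODE and use that $rv'$ is nondecreasing, since $(rv')'=\varphi^2 r[v]_+^p\ge 0$; the paper applies this at a Rolle point $r_m$ to show $v$ is nonincreasing then nondecreasing, while you apply it at an interior maximum between two zeros (the same argument in local form), and your justification of radial symmetry via uniqueness and your remarks on closedness and the degenerate case $a=b$ fill in details the paper leaves implicit.
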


\begin{proof}
By rotational symmetry and the imposed boundary conditions, the solution
to Eq.~\eqref{reactionvolume} is radially symmetric,
\(
u(x,y)=v(r),
r=\sqrt{x^2+y^2},
\)
where \(v\) satisfies
\[
-\frac1r(r v')'+\varphi^2[v]_+^p=0
\quad\text{in }(r_1,1),
\qquad
v(r_1)=v(1)=1.
\]

Since \(v(r_1)=v(1)=1\), Rolle’s theorem implies the existence of
\(r_m\in(r_1,1)\) such that
\[
v'(r_m)=0.
\]

Furthermore,
\[
(rv')'=\varphi^2 r[v]_+^p\ge0,
\]
and therefore \(rv'\) is nondecreasing on \((r_1,1)\). Since \(r>0\),
it follows that
\[
v'(r)\le0 \quad \text{for } r<r_m,
\qquad
v'(r)\ge0 \quad \text{for } r>r_m.
\]

Hence \(v\) is nonincreasing on \((r_1,r_m)\) and nondecreasing on
\((r_m,1)\). Assume now that there exist \(r_a<r_b\) such that
\[
v(r_a)=v(r_b)=0.
\]
Since \(v\ge0\), the monotonicity properties imply
\[
v(r)=0
\qquad
\forall r\in[r_a,r_b].
\]
Thus the zero set of \(v\) is an interval, and the dead-zone therefore
forms a concentric annulus.
\end{proof}

No analytical reference solution is available for the disk-with-hole
geometry. Therefore, we first investigate the convergence of the
computed discrete energies. 
Table~\ref{tab:diskhole-energy} shows a
monotone decrease of the discrete energy under mesh refinement,
indicating convergence of the finite element approximation. 

\begin{table}[H]
\centering
\setlength{\tabcolsep}{8pt}
\renewcommand{\arraystretch}{1.15}
\begin{tabular}{c c r c}
\hline
level & $h_{\max}$ & $N_T$ & $E_h$ \\
\hline
0 & 0.2000 & 236   & 35.9356509677 \\
1 & 0.1000 & 680   & 34.6200750680 \\
2 & 0.0500 & 2724  & 34.0365841901 \\
3 & 0.0250 & 11212 & 33.8992278692 \\
4 & 0.0125 & 42900 & 33.8650711409 \\
\hline
\end{tabular}
\caption{Discrete energy under mesh refinement for the
disk-with-hole geometry.}
\label{tab:diskhole-energy}
\end{table}

Assuming asymptotic second-order convergence of the discrete energy,
a Richardson extrapolation based on the two finest meshes gives
\[
E_{\mathrm{ref}}
\approx
E_{h_4}
+
\frac{E_{h_4}-E_{h_3}}{2^2-1},
\]
where $E_{h_3}$ and $E_{h_4}$ denote the discrete energies
corresponding to levels $3$ and $4$, respectively. This gives
\[
E_{\mathrm{ref}} \approx 33.8536855648.
\]

We next investigate the reconstruction of the dead-zone interface
under mesh refinement. The interface is identified as the level set
\(
u=\alpha,
\)
where $\alpha>0$ is a post-processing detection threshold. Several
fixed values of $\alpha$ are considered to assess the sensitivity of
the reconstructed interface to this parameter.

Table~\ref{tab:diskhole-alpha} reports the reconstructed inner and
outer interface radii. The results show that the detected interface
depends on both the mesh resolution and the selected threshold. The
larger threshold gives a relatively stable reconstruction across the
mesh levels, whereas smaller thresholds exhibit a stronger dependence
on the spatial resolution. On the coarsest mesh, the smallest
thresholds are not resolved and therefore do not produce a
reconstructed interface.

\begin{table}[H]
\centering
\setlength{\tabcolsep}{6pt}
\renewcommand{\arraystretch}{1.15}
\begin{tabular}{c c c c c c c c}
\hline
level & $h_{\max}$ &
\multicolumn{2}{c}{$\alpha=10^{-3}$} &
\multicolumn{2}{c}{$\alpha=10^{-4}$} &
\multicolumn{2}{c}{$\alpha=10^{-5}$} \\
& &
$r_{\mathrm{inner}}$ & $r_{\mathrm{outer}}$ &
$r_{\mathrm{inner}}$ & $r_{\mathrm{outer}}$ &
$r_{\mathrm{inner}}$ & $r_{\mathrm{outer}}$ \\
\hline
0 & 0.2000 & 0.517366 & 0.667643 & --- & --- & --- & --- \\
1 & 0.1000 & 0.530553 & 0.666389 & 0.540378 & 0.658178 & 0.542061 & 0.656595 \\
2 & 0.0500 & 0.522214 & 0.669022 & 0.544544 & 0.642869 & 0.564772 & 0.618734 \\
3 & 0.0250 & 0.526294 & 0.668145 & 0.535538 & 0.658764 & 0.538537 & 0.654080 \\
4 & 0.0125 & 0.527844 & 0.665888 & 0.538133 & 0.654615 & 0.542874 & 0.649128 \\
\hline
\end{tabular}
\caption{Reconstructed inner and outer dead-zone interface radii for
selected detection thresholds $\alpha$ under mesh refinement in the
disk-with-hole geometry. A dash indicates that the corresponding
level set was not resolved.}
\label{tab:diskhole-alpha}
\end{table}

The corresponding dead-zone area fractions are reported in
Table~\ref{tab:diskhole-fraction}. Their variation with mesh refinement is
consistent with the behavior observed for the individual interface
radii. In particular, decreasing the detection threshold does not
necessarily lead to a monotone improvement under mesh refinement.
This indicates that the threshold should be chosen in relation to the
spatial resolution, since very small values probe the numerical
solution increasingly close to the dead-zone boundary.

\begin{table}[H]
\centering
\setlength{\tabcolsep}{7pt}
\renewcommand{\arraystretch}{1.15}
\begin{tabular}{c c c c c}
\hline
level & $h_{\max}$ &
$\alpha=10^{-3}$ &
$\alpha=10^{-4}$ &
$\alpha=10^{-5}$ \\
\hline
0 & 0.2000 & 0.189951 & ---      & ---      \\
1 & 0.1000 & 0.173427 & 0.150602 & 0.146439 \\
2 & 0.0500 & 0.186542 & 0.124536 & 0.068122 \\
3 & 0.0250 & 0.180728 & 0.156981 & 0.146985 \\
4 & 0.0125 & 0.175773 & 0.148196 & 0.135098 \\
\hline
\end{tabular}
\caption{Reconstructed dead-zone area fraction
$A_{d,h}/A_\Omega$ for selected detection thresholds $\alpha$
under mesh refinement in the disk-with-hole geometry.}
\label{tab:diskhole-fraction}
\end{table}

%====================================================================

\subsection{Hexagonal geometries}

We next consider reactor geometries based on a regular outer hexagon
with either a hexagonal or a circular hole in the center. In both
cases, homogeneous Dirichlet boundary conditions $u=1$ are imposed on
all boundary components.

The formation and subsequent merging of dead zones depend on the
Thiele modulus, the reaction exponent, and the geometry of the central
hole. As the Thiele modulus increases, a dead zone may first appear as
one or several disconnected regions. With further increase of the
Thiele modulus, these regions may expand and merge, resulting in a
connected dead zone. The corresponding transition values depend on
both the reaction exponent and the geometry of the central hole.

Figure~\ref{fig:hexagonal_geometries} illustrates this behavior for
$p=0.1$ and both types of central hole. For $\varphi=4.8$,
disconnected dead-zone islands are observed in both geometries. At
$\varphi=5$, the dead zone has already become connected for the
hexagonal hole, whereas disconnected islands persist for the circular
hole. For $\varphi=6$, the dead zone is connected in both geometries.
These results demonstrate that, for this reaction exponent, the
transition between disconnected and connected dead zones depends not
only on the Thiele modulus but also on the shape of the central hole.
In all cases, the reconstructed dead-zone interfaces are obtained using
the detection threshold $\alpha=10^{-4}$.

\begin{figure}
\centering

\begin{minipage}[t]{0.48\linewidth}
\centering
\small

\includegraphics[width=0.9\linewidth]{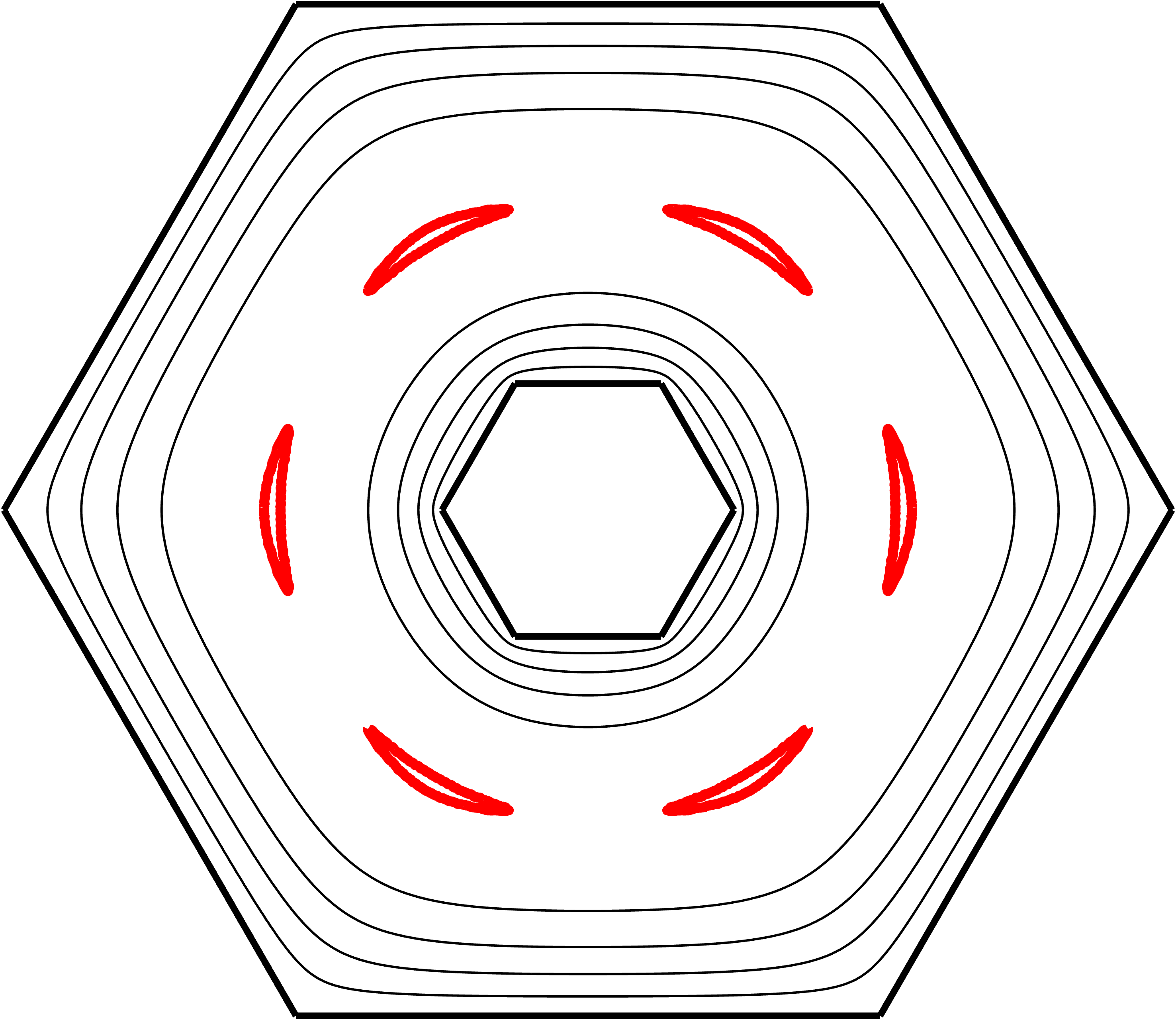}
\vspace{0.2cm}
Disconnected dead-zone islands, $\varphi=4.8$

\vspace{0.4cm}
\includegraphics[width=0.9\linewidth]{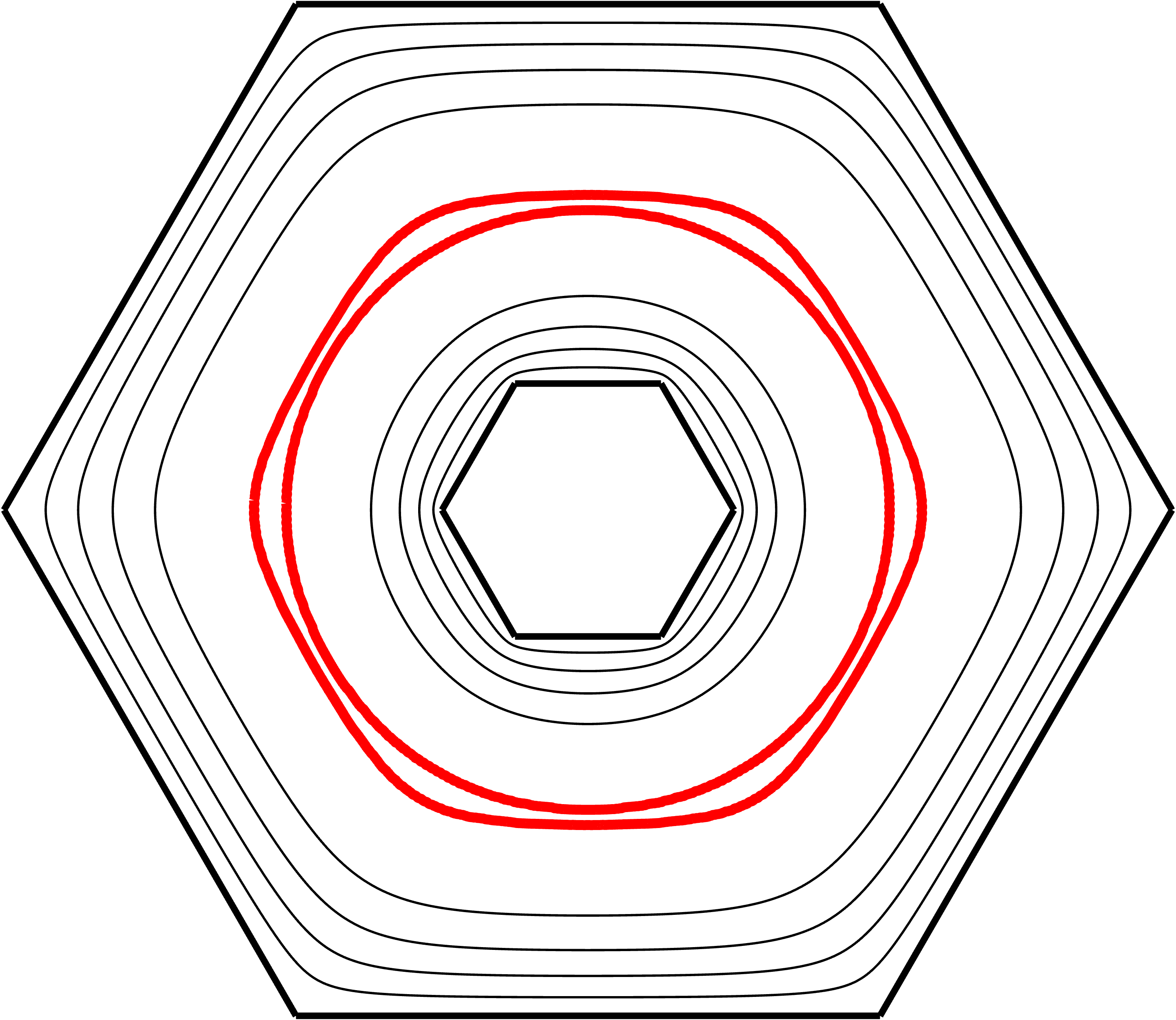}
\vspace{0.2cm}
Connected dead zone, $\varphi=5$

\vspace{0.4cm}
\includegraphics[width=0.9\linewidth]{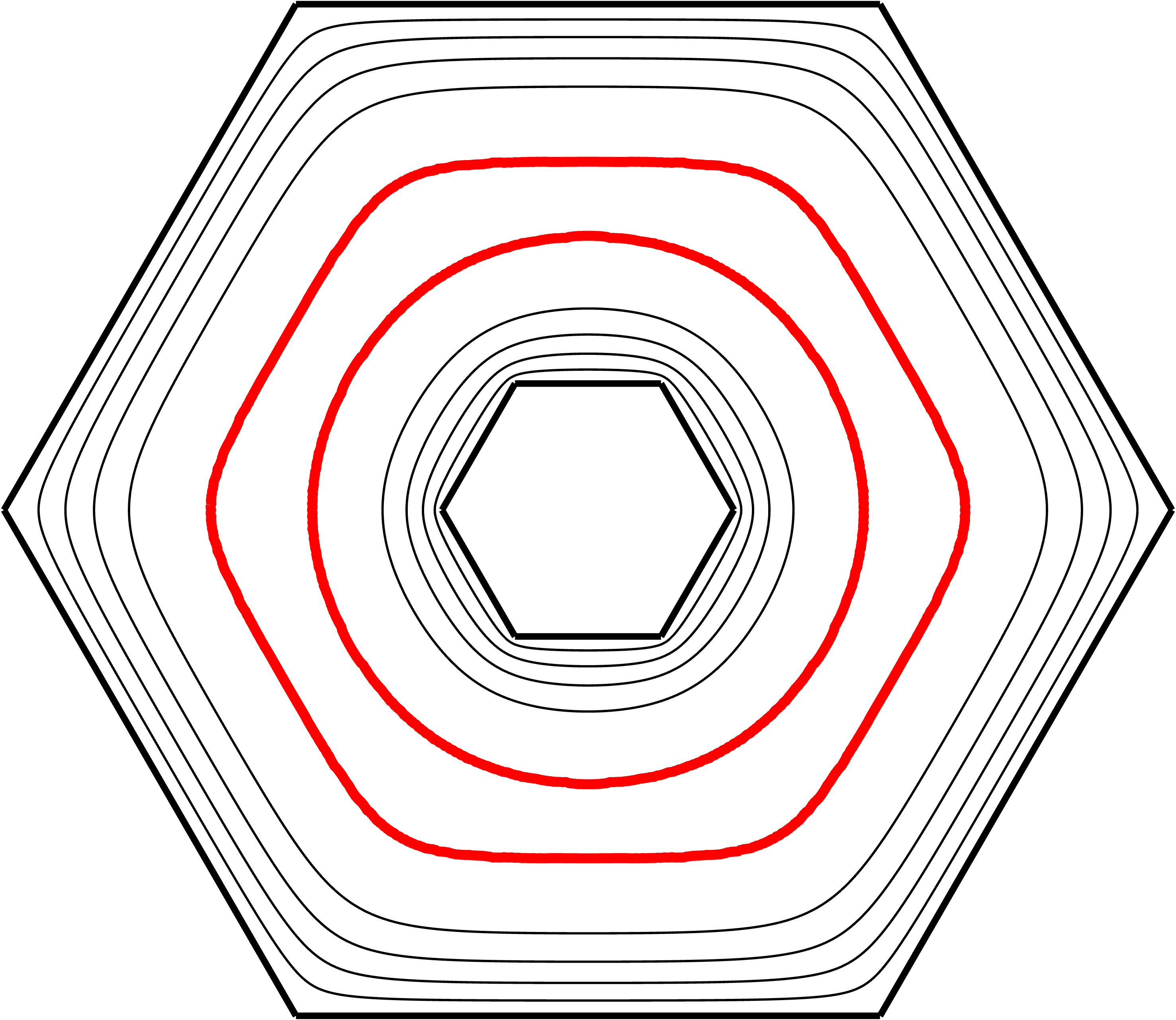}
\vspace{0.2cm}
Connected dead zone, $\varphi=6$

\vspace{0.2cm}
(a) Hexagonal hole
\end{minipage}
\hfill
\begin{minipage}[t]{0.48\linewidth}
\centering
\small

\includegraphics[width=0.9\linewidth]{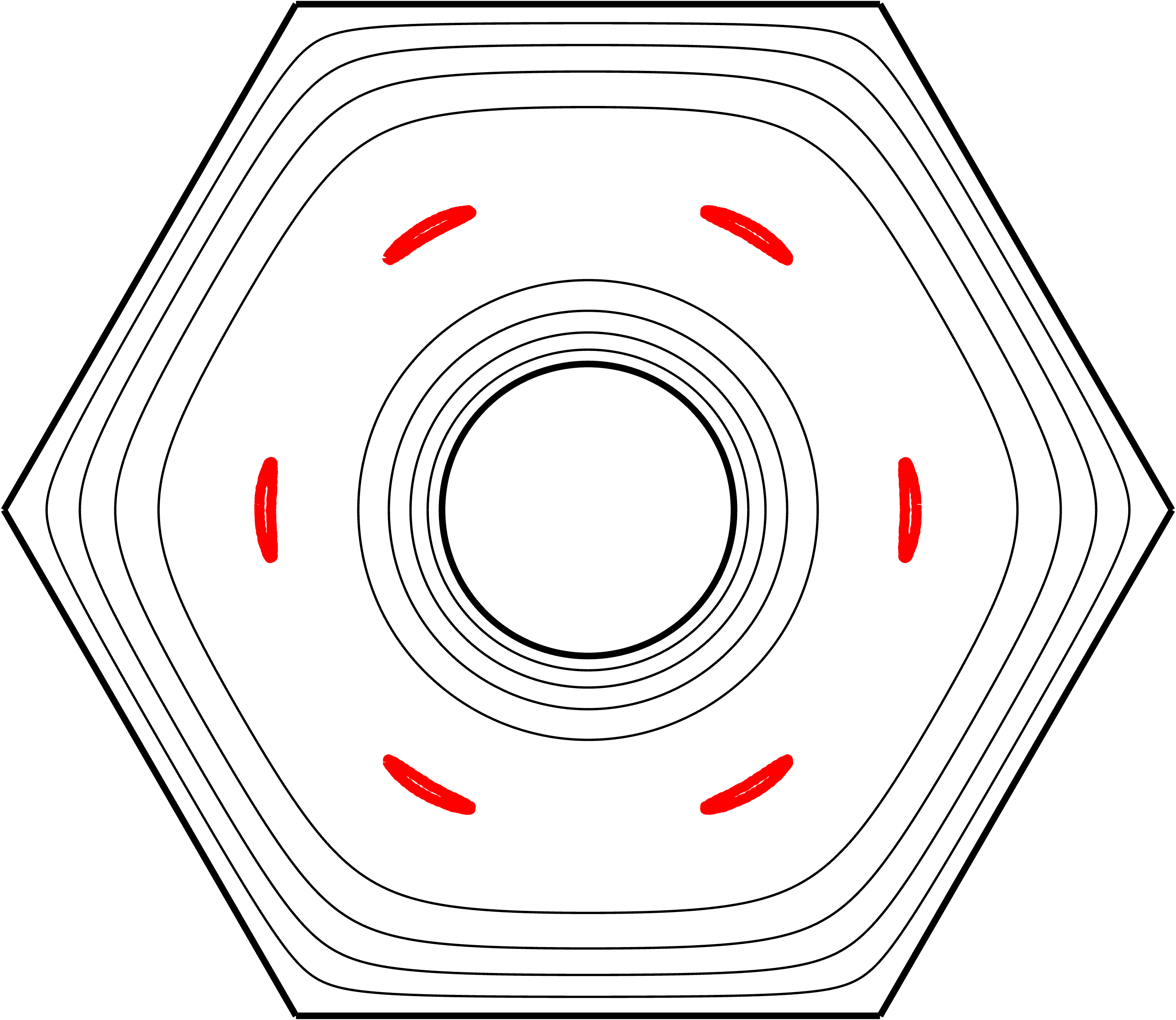}
\vspace{0.2cm}
Disconnected dead-zone islands, $\varphi=4.8$

\vspace{0.4cm}
\includegraphics[width=0.9\linewidth]{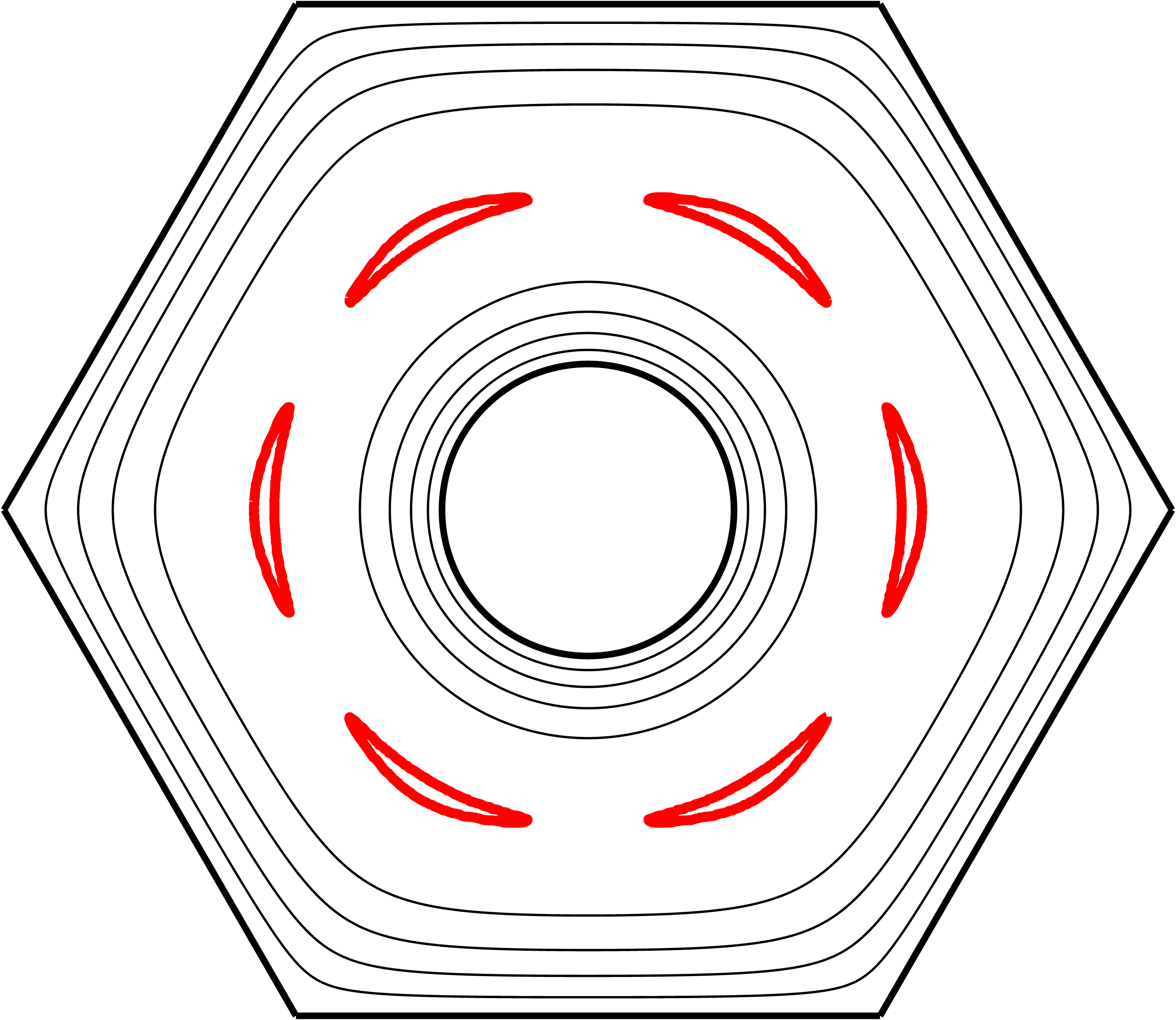}
\vspace{0.2cm}
Disconnected dead zone, $\varphi=5$

\vspace{0.4cm}
\includegraphics[width=0.9\linewidth]{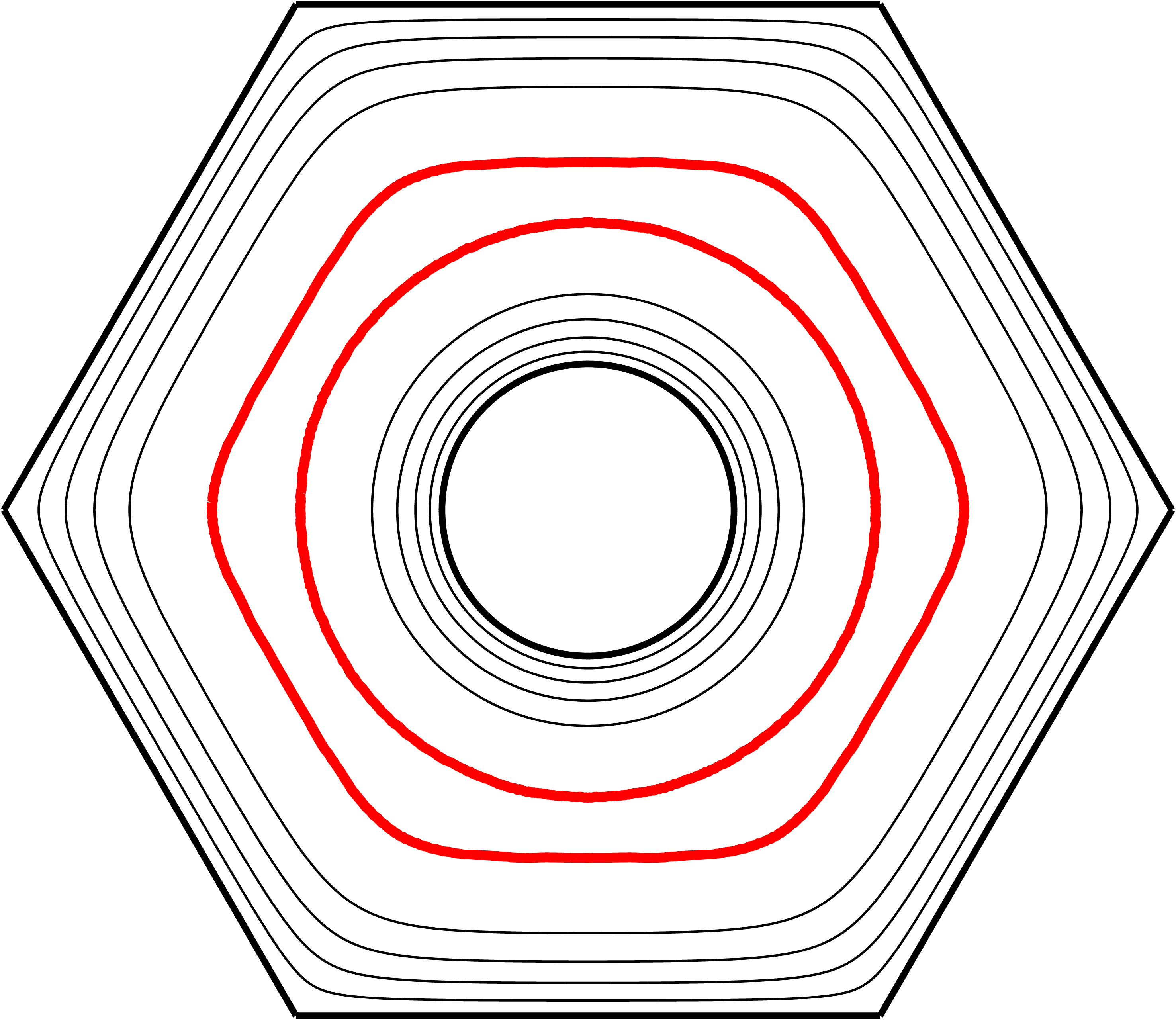}
\vspace{0.2cm}
Connected dead zone, $\varphi=6$

\vspace{0.2cm}
(b) Circular hole
\end{minipage}

\caption{Dead-zone formation in hexagonal geometries for $p=0.1$.
%Results are shown for a hexagonal hole (a) and a circular hole (b) at $\varphi=4.8$, $5$, and $6$. 
Reconstructed dead-zone interfaces are
shown in red using $\alpha=10^{-4}$.}
\label{fig:hexagonal_geometries}
\end{figure}

Figure~\ref{fig:hexagonal_geometries_p} illustrates the effect of the
reaction exponent $p$ on the dead-zone topology at a fixed Thiele
modulus, $\varphi=9.5$. For $p=0.2$, the dead zone forms a connected
region in both the hexagonal-hole and circular-hole geometries. In
contrast, for $p=0.5$, six disconnected dead-zone islands are observed
in both geometries. Thus, increasing the reaction exponent from
$p=0.2$ to $p=0.5$ changes the dead-zone topology at the same value of
the Thiele modulus. The results also indicate that the transition
between connected and disconnected configurations depends on the
reaction exponent and, potentially, on the geometry of the central
hole.

\begin{figure}
\centering

\begin{minipage}[t]{0.48\linewidth}
\centering
\small

\includegraphics[width=0.9\linewidth]{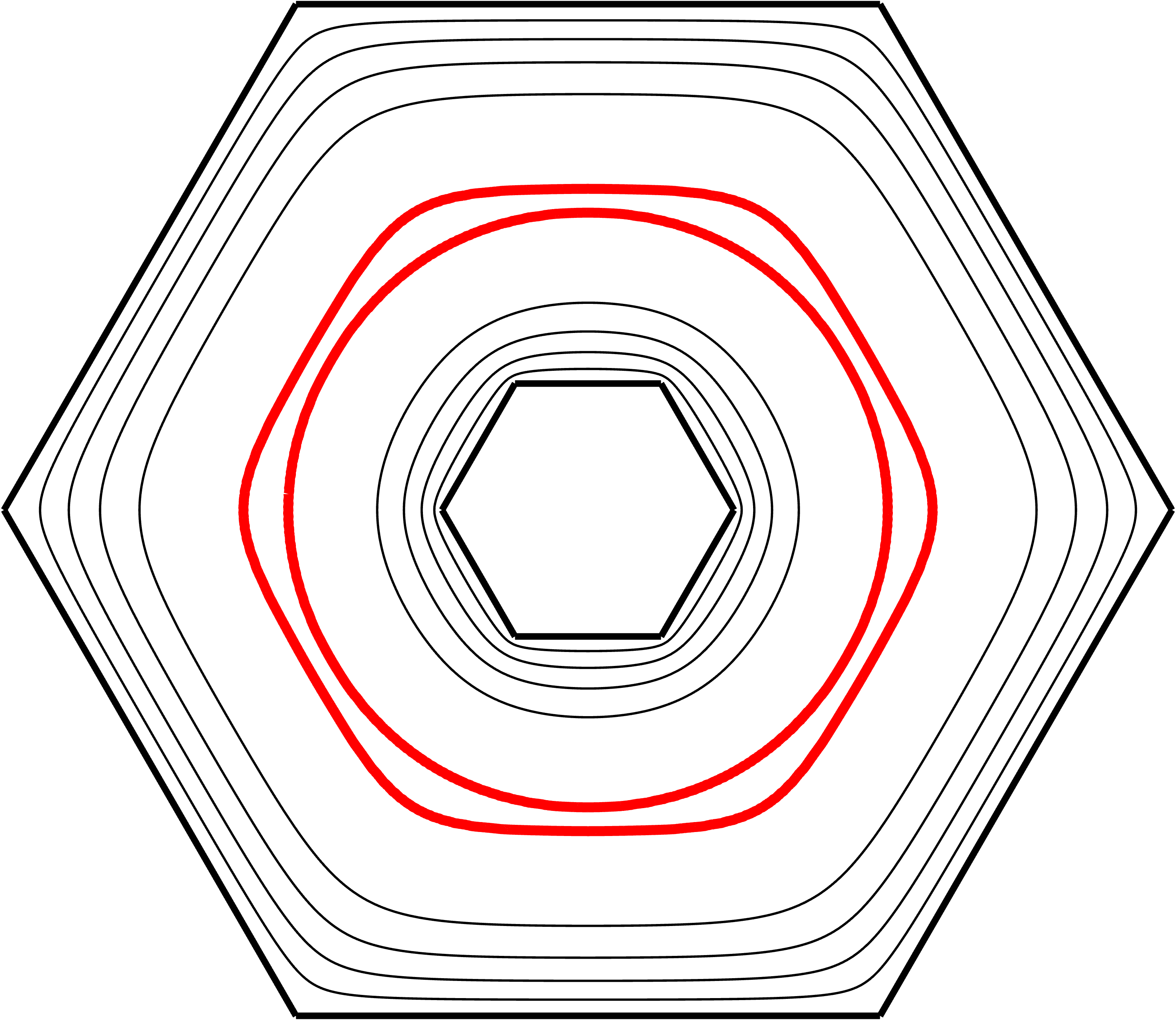}
\vspace{0.2cm}
Connected dead zone, $p=0.2$

\vspace{0.4cm}
\includegraphics[width=0.9\linewidth]{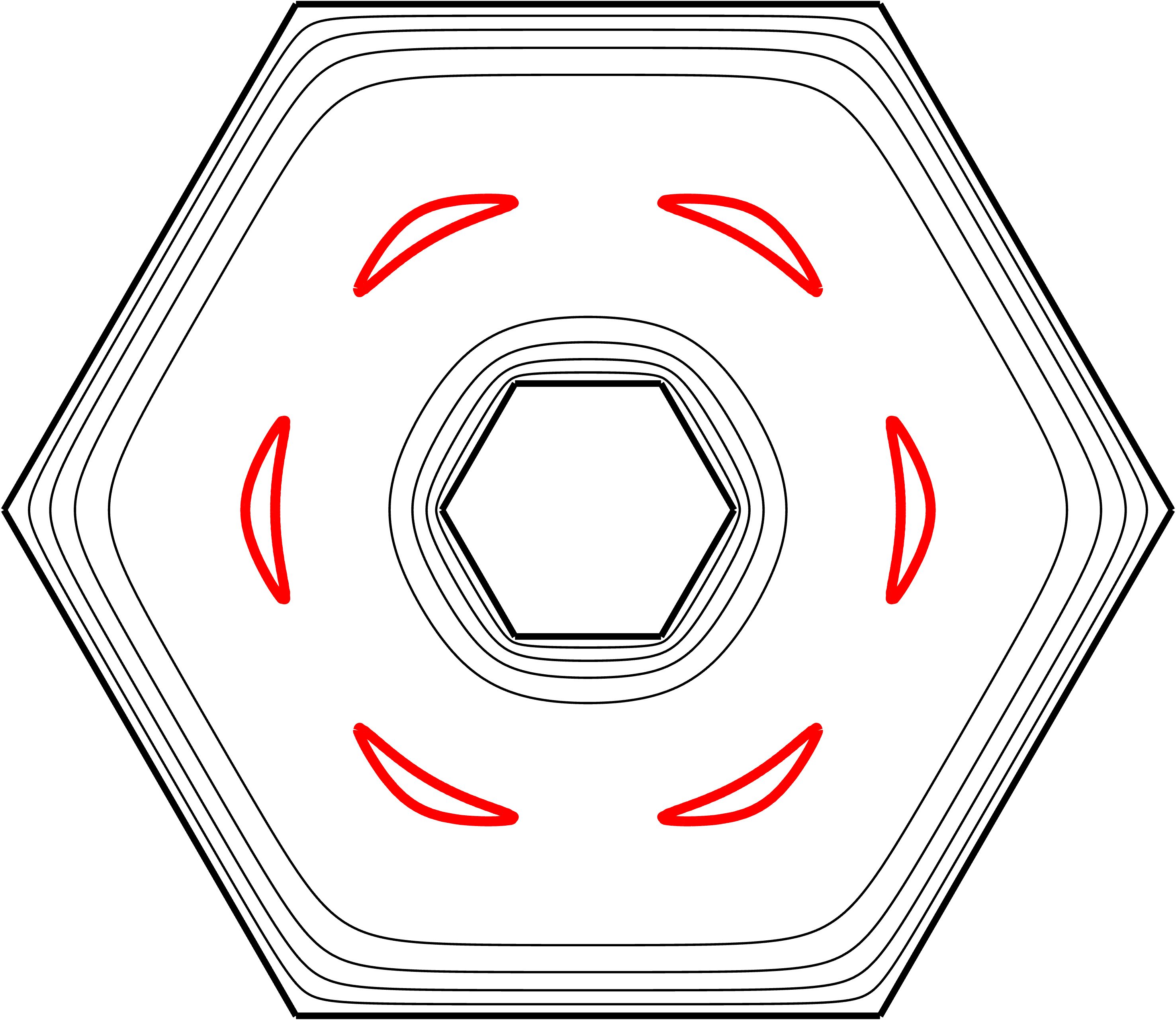}
\vspace{0.2cm}
Disconnected dead-zone islands, $p=0.5$

\vspace{0.2cm}
(a) Hexagonal hole
\end{minipage}
\hfill
\begin{minipage}[t]{0.48\linewidth}
\centering
\small

\includegraphics[width=0.9\linewidth]{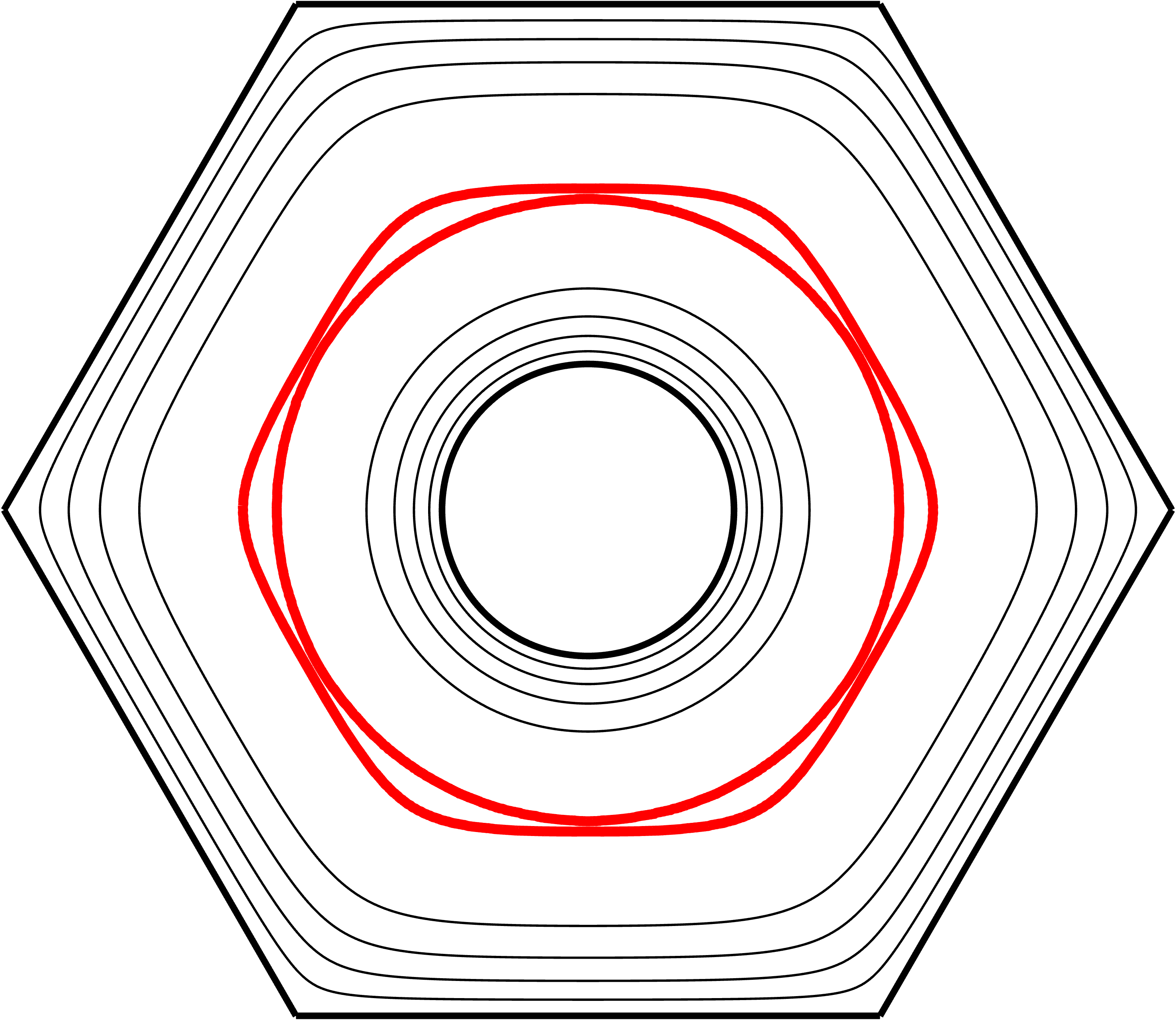}
\vspace{0.2cm}
Connected dead zone, $p=0.2$

\vspace{0.4cm}
\includegraphics[width=0.9\linewidth]{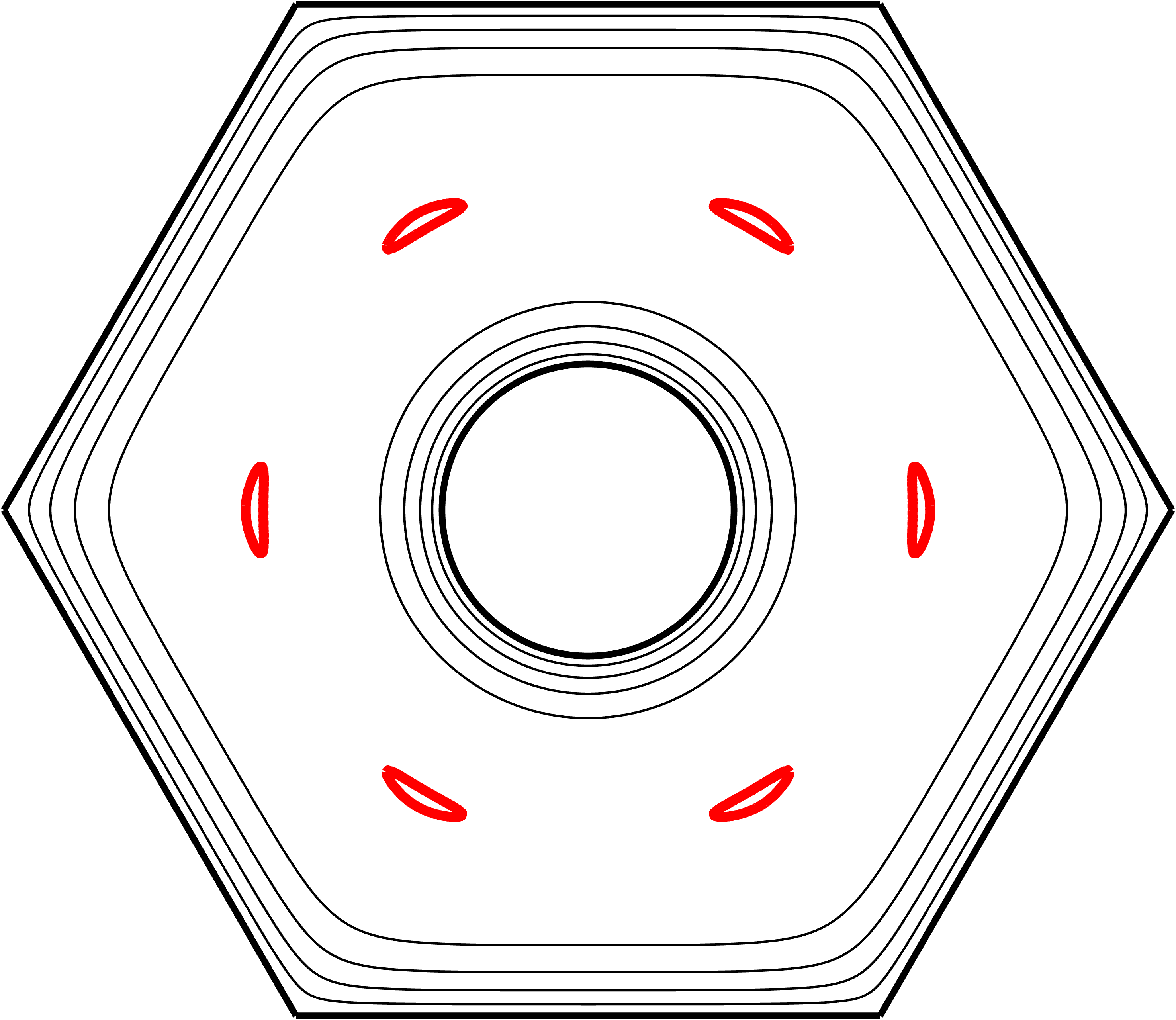}
\vspace{0.2cm}
Disconnected dead-zone islands, $p=0.5$

\vspace{0.2cm}
(b) Circular hole
\end{minipage}

\caption{Dead-zone formation in hexagonal geometries for
$\varphi=9.5$. Reconstructed dead-zone interfaces are shown in red
using $\alpha=10^{-4}$.}
\label{fig:hexagonal_geometries_p}
\end{figure}

%====================================================================

\subsection{Circular geometry with four holes}
Finally, we consider a circular reactor containing four symmetrically placed circular holes. As in the previous examples, the topology of the dead zone is strongly influenced by the Thiele modulus. 

For $\varphi=7$, the dead zone consists of five disconnected components, as shown in Figure~\ref{fig:fourholes_deadcore}(a). The corresponding three-dimensional solution profile is presented in Figure~\ref{fig:fourholes_deadcore}(b). As the Thiele modulus increases to $\varphi=10$, the dead-zone components expand and merge into a connected cross-shaped region; see Figure~\ref{fig:fourholes_deadcore}(c,d). A further increase to $\varphi=13$ leads to substantial growth of the dead zone, which occupies most of the reactor interior and leaves only narrow active regions near the outer boundary and the boundaries of the holes; see Figure~\ref{fig:fourholes_deadcore}(e,f). The three-dimensional visualizations clearly demonstrate the progressive localization of the solution as the Thiele modulus increases. The obtained numerical results are consistent with the asymptotic analysis of Friedman and Phillips \cite{friedman84}, which predicts that the distance between the dead-core interface and the domain boundary scales as $\frac{\gamma}{\varphi}+\bigO\left(\frac{1}{\varphi^2}\right)$ where $\gamma$ is a geometry-dependent positive constant.

The simulation results show  that dead-core formation depends strongly on reactor geometry, the Thiele modulus, and the reaction exponent. In domains containing holes, dead zones tend to nucleate in regions that are furthest from the external boundary and therefore experience the strongest diffusion limitation. Geometric symmetry is reflected in the symmetry of the resulting dead-core islands. As the Thiele modulus increases, individual dead zones expand and eventually merge, producing large connected inactive regions. This behavior resembles geometric percolation phenomena and suggests that reactor topology may strongly affect catalyst utilization.
\begin{figure}
\centering

\begin{minipage}[t]{0.495\linewidth}
\centering
\includegraphics[width=0.8\linewidth]{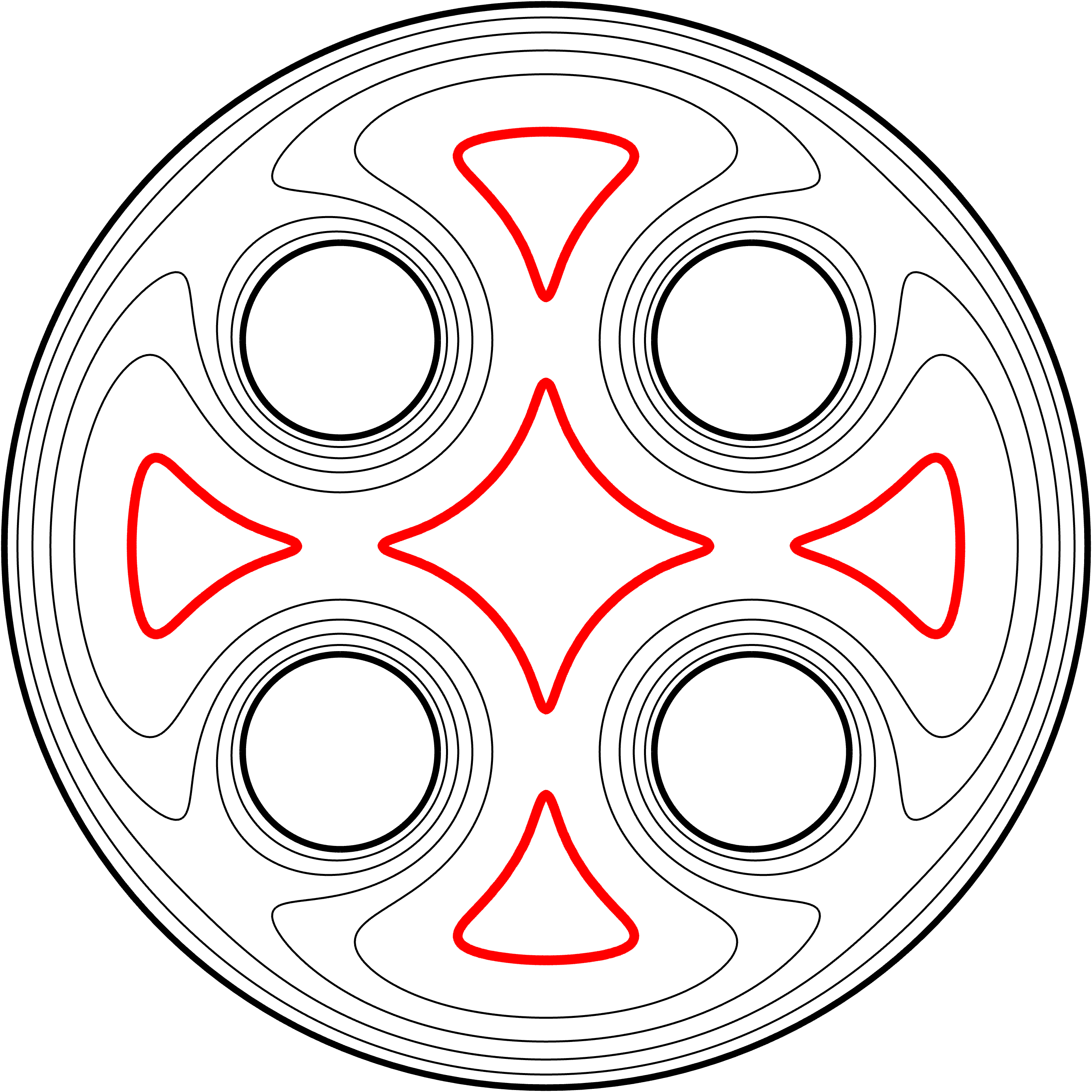}
\vspace{0.02cm}

\small
(a) Disconnected dead-zone components, $\varphi=7$
\end{minipage}
\hfill
\begin{minipage}[t]{0.495\linewidth}
\centering
\includegraphics[width=0.85\linewidth]{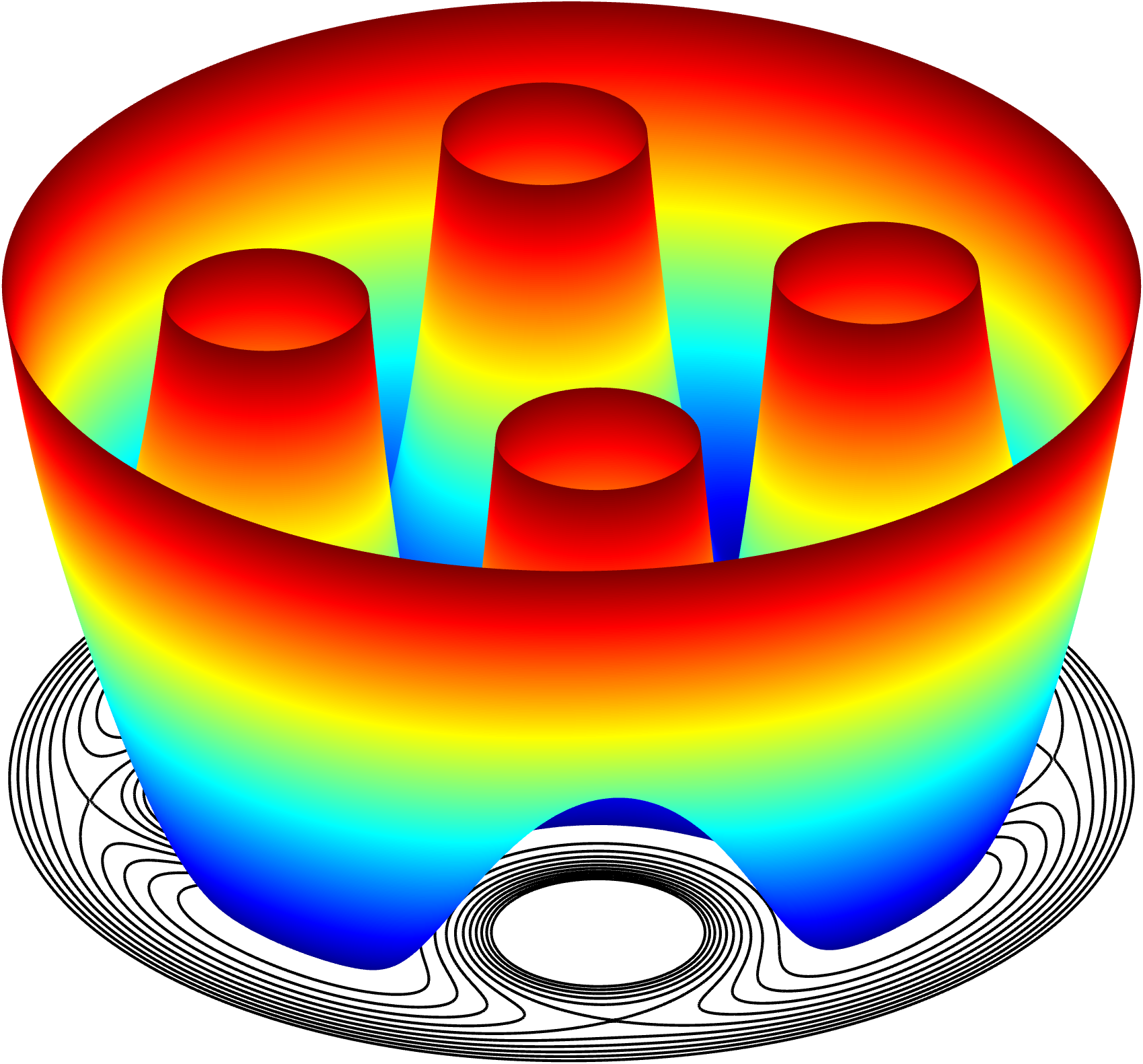}
\vspace{0.02cm}

\small
(b) Numerical solution for $\varphi=7$
\end{minipage}

\vspace{0.45cm} 

\begin{minipage}[t]{0.495\linewidth}
\centering
\includegraphics[width=0.8\linewidth]{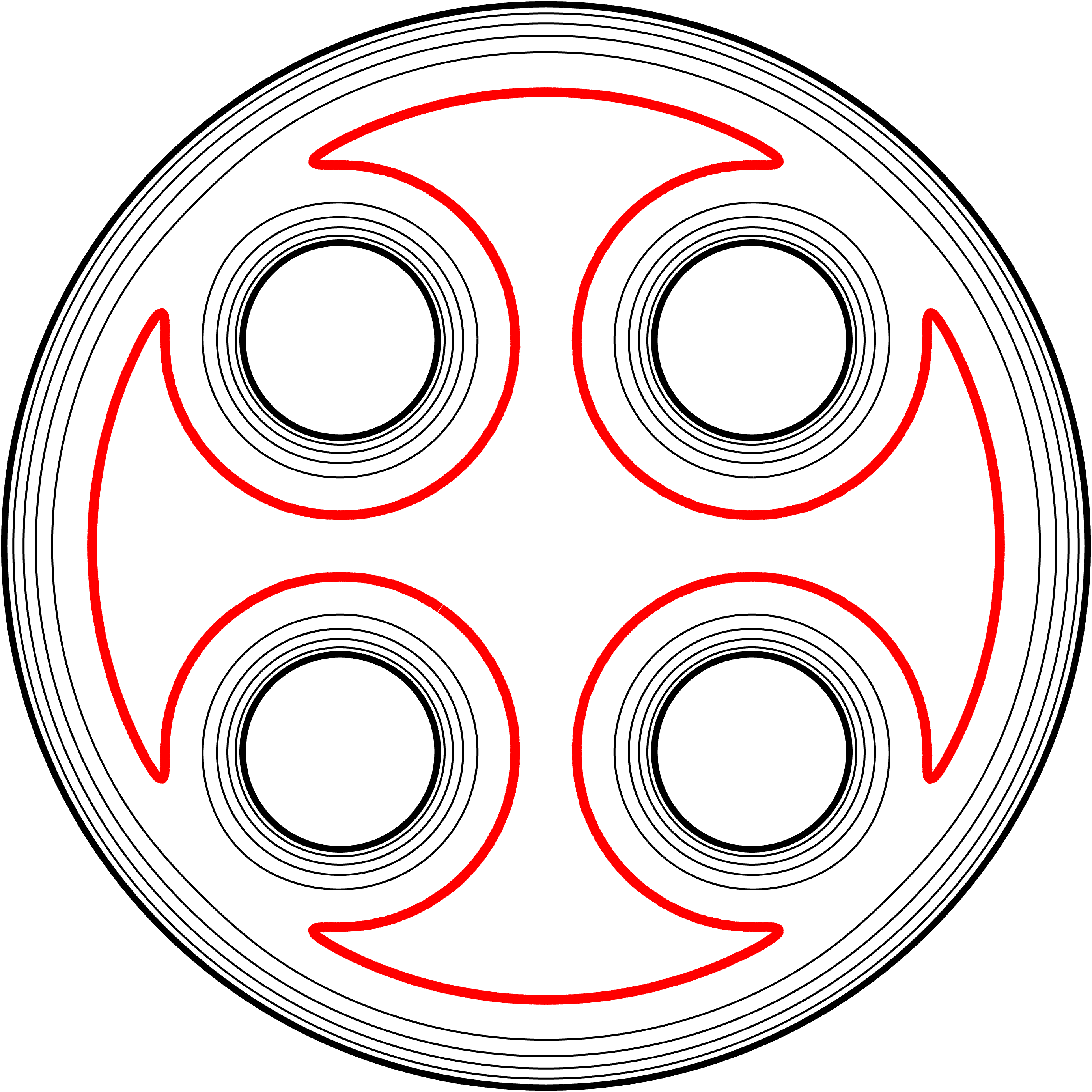}
\vspace{0.02cm}

\small
(c) Connected dead zone, $\varphi=10$
\end{minipage}
\hfill
\begin{minipage}[t]{0.495\linewidth}
\centering
\includegraphics[width=0.85\linewidth]{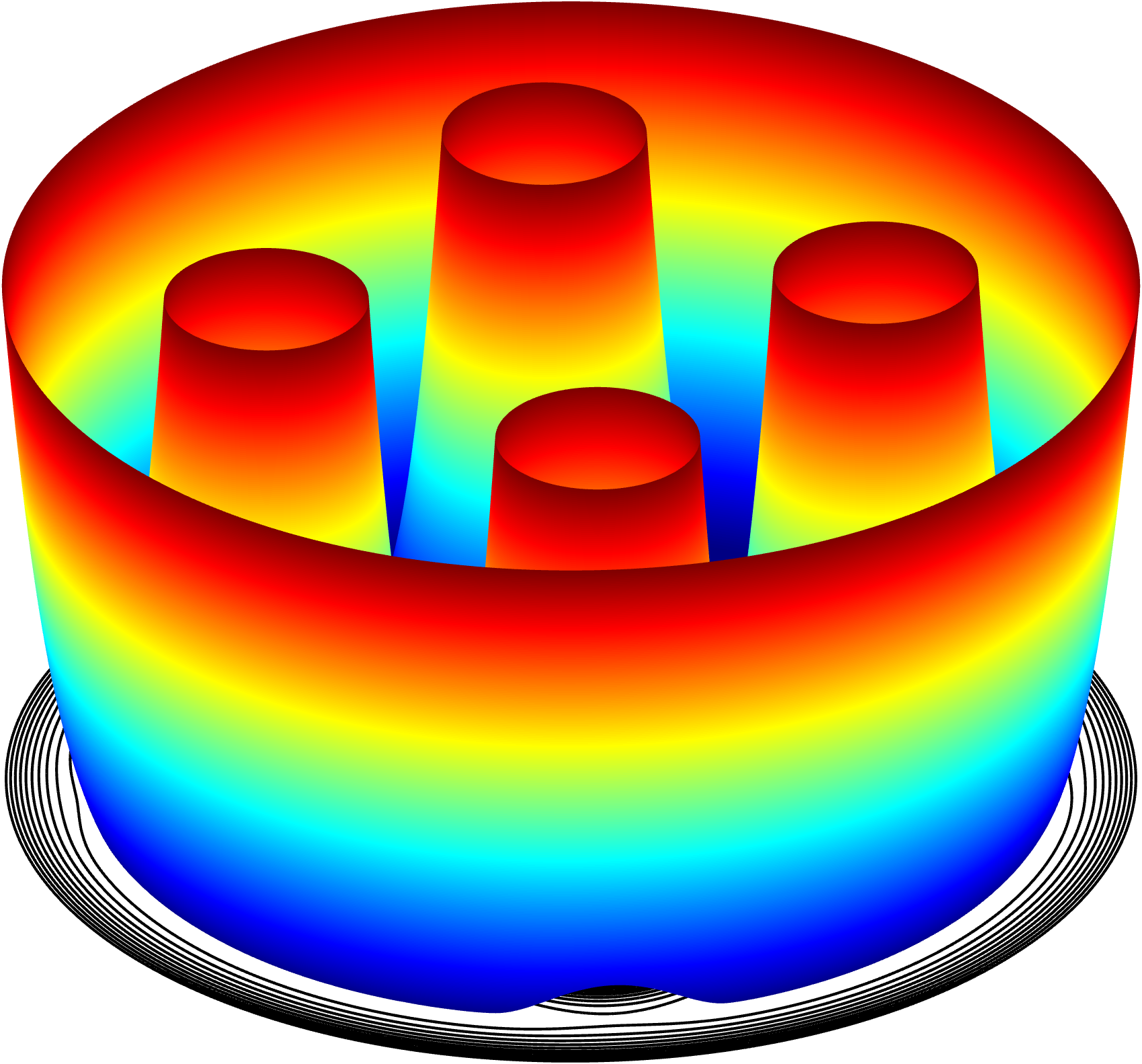}
\vspace{0.02cm}

\small
(d) Numerical solution for $\varphi=10$
\end{minipage}

\vspace{0.45cm}

\begin{minipage}[t]{0.495\linewidth}
\centering
\includegraphics[width=0.8\linewidth]{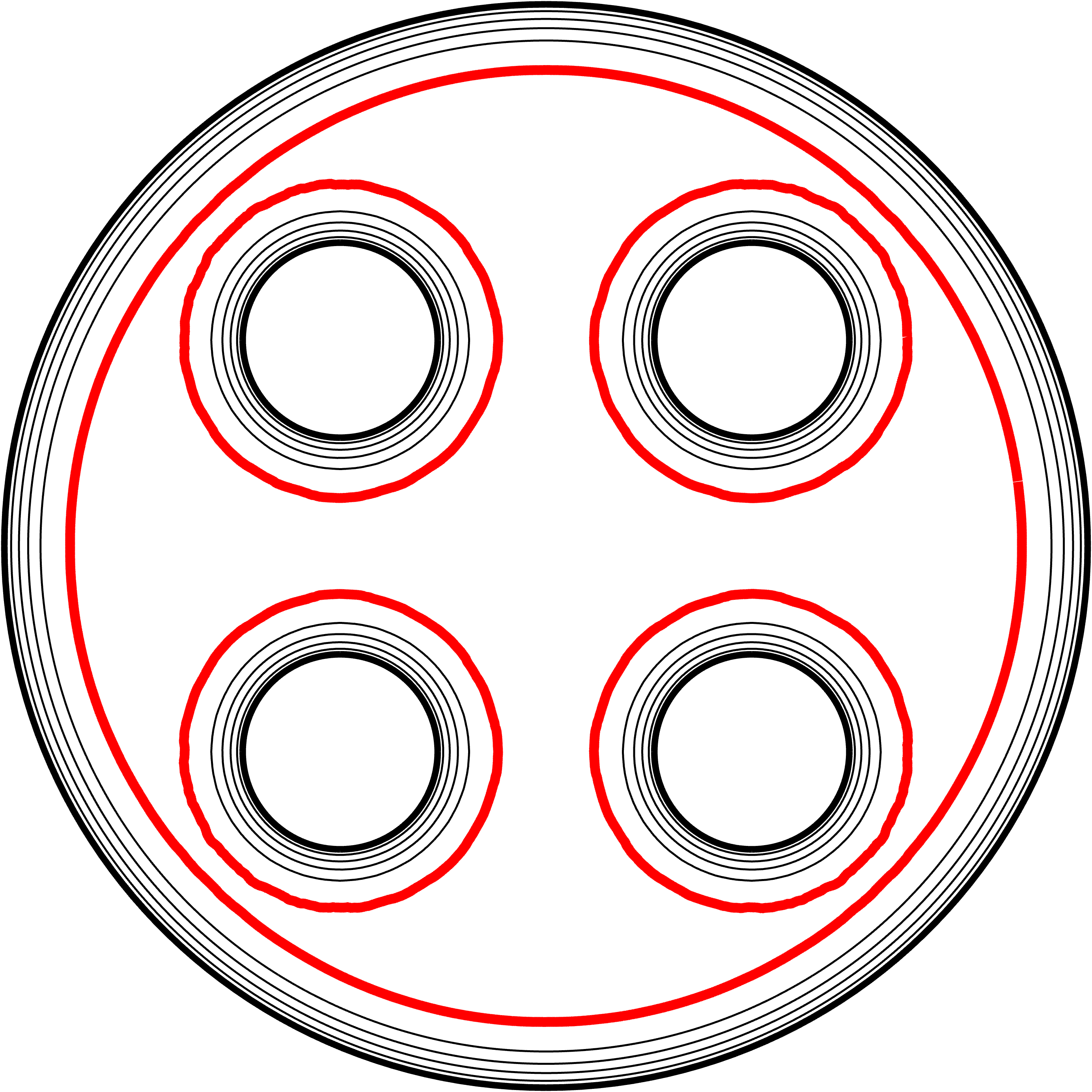}
\vspace{0.02cm}

\small
(e) Connected dead zone, $\varphi=13$
\end{minipage}
\hfill
\begin{minipage}[t]{0.495\linewidth}
\centering
\includegraphics[width=0.85\linewidth]{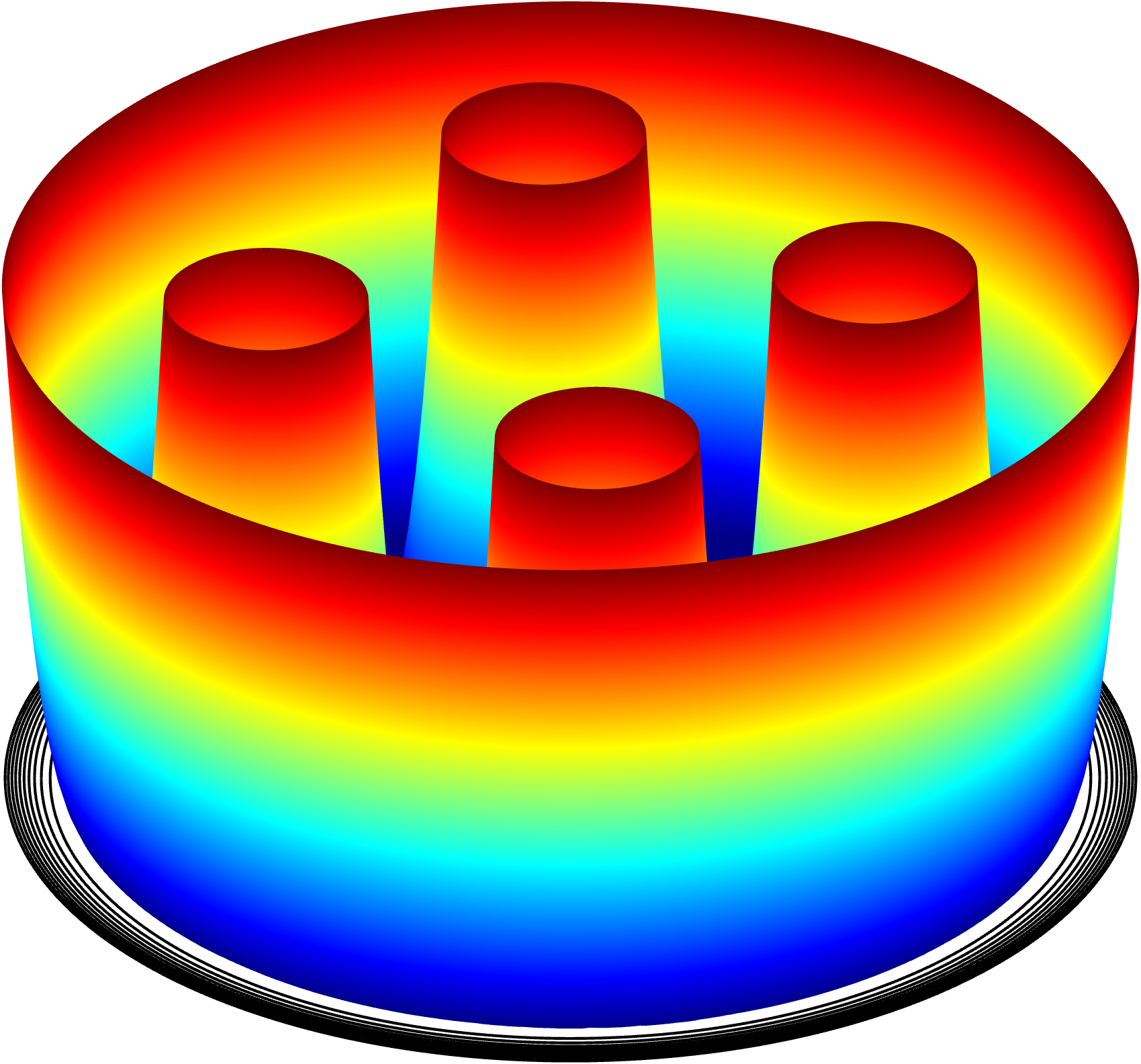}
\vspace{0.02cm}

\small
(f) Numerical solution for $\varphi=13$
\end{minipage}

\caption{
Dead-zone formation in a circular domain with four circular holes for
$p=0.1$. Panels (a)--(b), (c)--(d), and (e)--(f) correspond to
$\varphi=7$, $10$, and $13$, respectively. Red curves indicate the
reconstructed dead-zone interfaces using $\alpha=10^{-4}$.
}
\label{fig:fourholes_deadcore}
\end{figure}

\section{Code structure and availability}\label{sec:code}

The MATLAB implementation is organized into modular components
corresponding to the main stages of the computation. The driver script
\texttt{main.m} initializes the model parameters, constructs
the computational geometry, invokes the nonlinear solver, performs
dead-core detection, computes error measures, and generates graphical
output.

The code is organized into the following directories:

\begin{itemize}
\item \texttt{config/}: parameter definitions and default settings.
\item \texttt{exact/}: reference solutions and validation routines.
\item \texttt{fem/}: geometry construction and finite element assembly.
\item \texttt{solver/}: nonlinear solution algorithms.
\item \texttt{postproc/}: dead-core detection and error evaluation.
\item \texttt{visualization/}: plotting and figure generation.
\item \texttt{utils/}: auxiliary utility functions.
\item \texttt{results/}: automatically generated figures and numerical output.
\end{itemize}

The finite element models support several geometries, including disks,
annuli, hexagons, and multi-hole domains. For the disk benchmark
problem, a high-accuracy radial reference solution is available and is
used for validation and error estimation. For more general geometries,
where reference solutions are unavailable, the software provides
numerical diagnostics and post-processing tools for solution
assessment. Visualization routines produce two-dimensional contour
plots, three-dimensional surface plots, and convergence diagnostics.
Figures can be exported in PNG or PDF format, with all output stored
automatically in the \texttt{results/} directory.

The overall workflow of the software can be summarized as
\[
\texttt{main}
\rightarrow
\texttt{fem}
\rightarrow
\texttt{solver}
\rightarrow
\texttt{postproc}
\rightarrow
\texttt{visualization},
\]
with the optional use of the \texttt{exact/} module for the unit disk benchmark
problem possessing reference solutions.

The modular design facilitates the incorporation of new geometries,
nonlinear solvers, and post-processing procedures while maintaining a
compact and extensible code base.

The MATLAB\textsuperscript{\tiny\textregistered} codes used to generate
the numerical results and figures are available from the corresponding
author upon reasonable request.

\section{Conclusions}\label{sec:conclusions}

This paper investigated dead-zone formation in two-dimensional
diffusion--reaction systems with power-law kinetics and fractional
reaction orders. An IMEX time discretization combined with a lumped
finite element method was employed to compute stationary solutions and
their associated inactive regions. The numerical results demonstrate
that dead-zone formation and topology depend on the combined effects of
the reaction exponent, the Thiele modulus, and the reactor geometry.
Several disconnected dead-zone islands were observed, with further
parameter changes leading to their expansion and, in some cases,
merging into connected regions.

The unit-disk benchmark demonstrated convergence of the finite element
solution against a high-accuracy radial reference solution. The
interface reconstruction study showed that the detected dead-core
boundary depends on both the post-processing threshold and the spatial
resolution. For geometries without an independent reference solution,
the numerical results showed convergence of the discrete energy and
systematic dependence of the reconstructed interfaces on the detection
threshold.

The results for hexagonal and multi-hole geometries demonstrate that
reactor geometry strongly influences the number, shape, and
connectivity of dead zones. The proposed methodology therefore provides
a flexible framework for studying dead-zone formation in general
two-dimensional reactor domains. Future work will focus on systematic
investigation of the parameter regimes governing dead-zone formation
and topology, as well as extensions to non-isothermal and
multicomponent reaction systems and improved interface reconstruction
for complex geometries.

\section*{Acknowledgements}

J.~V. acknowledges institutional support from the Czech Academy
of Sciences associated with the DSc.\ degree award during a visit
to Nazarbayev University, Astana, Kazakhstan, in May 2026. This work
was also financially supported by the Czech Science Foundation
(Project No.~24-10366S), which contributed to the development of
computational methods relevant to the present study.

%This research was supported by research grants 02120FD0351 and 021220FD4851 from Nazarbayev University, 
%and Ulam NAWA grant BPN/ULM/2022/1/00164/DEC/1.  
%\textcolor{red}{Their requirements regarding the acknowledgments:}\\
%Acknowledgments of people, grants, funds, etc. should be placed in a separate section before the
%reference list. The names of funding organizations should be written in full.

% ------------------------------------------------------------------------
%Included for Gather Purpose only:
%input "Xbib.bib"

\bigskip
\end{document}
% ------------------------------------------------------------------------

% 33 L.C. Evans PDE 
\bibitem{Evans}
Evans, Lawrence C. {\it Partial differential equations}, Vol. 19. \newblock American mathematical society, 2022.

%\bibitem{Vsevolod2013} Andreev V.V.: Formation of a ``dead zone`` in porous structures during processes that proceeding under steady-state and unsteady-state conditions. Rev J Chem. \textbf{3}, 239--269 (2013)
%\newblock \url{http://dx.doi.org/10.1134/S2079978013030011}

\bibitem{bandle84}
Bandle C, Sperb RP, Stakgold I. Diffusion and reaction with monotone kinetics. Nonlinear Analysis: Theory, Methods \& Applications. 1984 Jan 1;8(4):321-33.

Since the nonlinear reaction term causes difficulties when generating the nonlinear algebraic systems for the nodal vector representing $u_h^{k+1}(x)\approx u_h(t_{k+1},x)$ 
to design efficient iterative solvers, we employ the mass lumping based on the following quadrature
\begin{equation*}
\int\limits_K w(x)\, dx \approx \frac{|K|}{3}\sum\limits_{i=1}^3 w(a_i^K)\,,
\end{equation*}
where $\{a_i^K\}_{i=1}^3$ are the vertices of the triangular mesh cell $K\in\{\mathcal{T}_h\}$. Then, the lumped version of the nonlinear term in Eq.~\eqref{Gal_stat} reads as follows
\begin{equation}\label{eq_nonlin_lumped}
\Biggl(f\Bigl(\sum\limits_{j=1}^{N_{VT}} u_j(t) \varphi_j\Bigr), \varphi_i\Biggr)
\approx \sum\limits_{j=1}^{N_{VT}} \frac{|\Omega_j|}{3}f\bigl(u_j(t)\bigr)\,,
\end{equation}
where $N_{VT}$ stands for the total number of vertices in the decomposition $\{{\mathcal{T}}_h\}$, $\varphi_i$, $i=1,\ldots, N_{VT}$, denotes the nodal piecewise linear basis function associated with the $i$-th vertex, and $\overline{\Omega}_i=\bigcup\limits_{K\in {\mathcal{T}}_h(a_i)} \overline{K}$ is a patch  that consists of all mesh cells belonging to the support of $\varphi_i$. 

%the group 
%finite element method \cite{griffiths} which is based on the following approximation 
%\begin{equation}\label{eq_fem_group}
%\Biggl(f\Bigl(\sum\limits_{j=1}^{N_{VT}} u_j(t) \varphi_j\Bigr), \varphi_i\Biggr)
%\approx \sum\limits_{j=1}^{N_{VT}} f\bigl(u_j(t)\bigr)\cdot (\varphi_j, \varphi_i)\,.   
%\end{equation}

Analogously, it holds true 
\begin{theorem}
Let $u\in H^2(\Omega)$ be the weak solution to the steady state problem \eqref{weak_one_comp}, and let $\hat{u}_h$ be the group finite element approximation. Then, it holds true that
\begin{equation}\label{fem_error}
|u-\hat{u}_h|_1\le C(u)\,h\,.
\end{equation}
\end{theorem}
\begin{proof}
Follow Barrett and Griffiths.
\end{proof}

\begin{figure}[htb]
\begin{center}
\includegraphics[width=0.42\textwidth]{circle_circle_holes_p_01_phi_13.pdf}\quad
\includegraphics[width=0.41\textwidth]{circle_square_holes_p_01_phi_13.pdf}
\caption{The isolated dead-zone islands for $n=0.1$, $\varphi=13$.} \label{fig_3}
\end{center}
\end{figure}

\begin{figure}[htb]
\begin{center}
\includegraphics[width=0.42\textwidth]{square_circle_hole_p_01_phi_9.pdf}\quad
\includegraphics[width=0.41\textwidth]{square_square_hole_p_01_phi_9.pdf}
\caption{The isolated dead-zone islands for $n=0.1$, $\varphi=9$.} \label{fig_4}
\end{center}
\end{figure}

\begin{figure}[htb]
\begin{center}
\includegraphics[width=0.42\textwidth]{P_hole_domain_p01_phi15.pdf}\quad
\includegraphics[width=0.41\textwidth]{A_hole_domain_p01_phi13.pdf}
\caption{The isolated dead-zone islands for $n=0.1$, $\varphi=15$ (left) and $n=0.1$, $\varphi=13$ (right).} \label{fig_5}
\end{center}
\end{figure}

%===================================================================================================================
%\vspace*{-1.6cm}
%\vspace*{-0.5cm}
\begin{figure}[htb]
\begin{center}
\includegraphics[width=0.42\textwidth]{hexagon_circle_islands_p0_1_phi2_22.png}\quad
\includegraphics[width=0.41\textwidth]{hexagon_circle_onedeadcore_p0_1_phi2_23.png}
\caption{The isolated dead-zone islands for $n=0.1$, $\varphi=2.22$ (left) and the connected dead zone for $n=0.1$, $\varphi=2.23$ (right).} \label{fig_2}
\end{center}
\end{figure}

\begin{figure}[htb]
\begin{center}
\includegraphics[width=0.42\textwidth]{hexagon_islands_p0_1_phi2_185.png}
%\quad
\includegraphics[width=0.42\textwidth]{hexagon_onedeadcore_p0_1_phi2_23.png}
\caption{The isolated dead-zone islands for $n=0.1$, $\varphi=2.185$ (left) and the connected dead zone for $n=0.1$, $\varphi=2.23$ (right).} \label{fig_1}
\end{center}
\end{figure}

\begin{lemma}
Let $\Omega$ be a unit disk with a circular hole of radius $r_1$. Then, the dead-zone consists of a ring around the origin. 
\end{lemma}
\begin{proof}
It follows from the symmetry and imposed boundary conditions that the solution to Eq.~\eqref{reactionvolume} is radial symmetric $u(x,y)=v(r)$, $r_1\le r\le 1$, and satisfies the following two-pint boundary value problem
\begin{equation}\label{deadcore_1d_ring}
\begin{split}
-\frac{1}{r}\frac{d}{dr}\left(r\frac{dv}{dr}\right)+\varphi^2 [v]_+^p&=0 \quad \text{in}\quad (r_1,1)\,,\qquad v(r_1)=1, \quad v(1)=1\,.
\end{split}
\end{equation}
The imposed boundary conditions $v(r_1)=v(1)=1$ imply that there exists $r_m$ such that $v'(r_m)=0$ due to Rolle's theorem. integrating Eq.~\eqref{deadcore_1d_ring} yields
\begin{equation}\label{eq_int_ring}
rv'(r)=\varphi^2\int\limits_{r_m}^r s[v(s)]_+^p\,ds\,,
\end{equation}
from which we conclude that
$v'(r)\le 0$ for $r\in (r_1,r_m)$ and $v'(r)\ge 0$ for $r\in (r_m,1)$. From Eq.~\eqref{deadcore_1d_ring} we deduce that 
\begin{equation}\label{eq_ddu_ring}
v''=\varphi^2 [v]_+^p-\frac{v'}{r}\quad\text{in}\quad(r_1,1)\,, 
\end{equation}
from which we infer $v''\ge 0$ on $(r_1,r_m)$. To show that $v''\ge 0$ on $(r_m,1)$ we employ the expansion 
\[
v(r)-v(r_m)=v'(r_m)(r-r_m)+\frac{(r-r_m)^2}{2}v''\bigl(r+\xi (r-r_m)\bigr)=\frac{(r-r_m)^2}{2}v''\bigl(r+\xi (r-r_m)\bigr)
\]
with some $\xi_r\in (0,1)$. Since $v(r)$ is non-decreasing on $(r_m,1)$ and $v\ge 0$ in $\Omega$ due to Eq.~\eqref{eq_max_principle}, we conclude that $v''\ge 0$ on $(r_m,1)$, and therefore $v$ is convex on $(r_1,1)$. The convexity of the dead-zone then follows from the convexity of $v$:
\begin{equation*}
0\le v\bigl(\alpha r_a+(1-\alpha)r_b\bigr)\le \alpha v(r_a) + (1-\alpha)v(r_b)=0
\end{equation*}
for all $r_a$ and $r_b$ from the dead-zone of $v(r)$ and all $\alpha\in [0,1]$. This means that the dead-zone for the 1d model by Eq.~\eqref{deadcore_1d_ring} is a closed interval in $(r_1,1)$ . Consequently, the dead-zone for the 2d model posed on the disc with a circular hole of radius $0<r_1<1$ is a ring around the origin due to the  radial symmetry. 
\end{proof}
}

\bibitem{griffiths} Christie I., Griffiths D.F., Mitchell R., Sanz-Serna J.M.:
Product approximation for non-linear problems in the finite element method.
IMA Journal of Numerical Analysis, 1(3):253-266, 1981.

Finally, we consider a circular reactor containing four symmetrically
placed circular holes. Similar to the previous examples, the topology of
the dead zone strongly depends on the Thiele modulus.

\begin{figure}
\centering

\begin{minipage}[t]{0.48\linewidth}
\centering
\includegraphics[width=0.95\linewidth]{fourholes_deadcore.png}

\vspace{0.2cm}

\small
(a) Isolated dead-zone islands, $\varphi=7$
\end{minipage}
\hfill
\begin{minipage}[t]{0.48\linewidth}
\centering
\includegraphics[width=0.95\linewidth]{fourholes_deadcore_connected.png}

\vspace{0.2cm}

\small
(b) Connected dead zone, $\varphi=10$
\end{minipage}

\vspace{0.4cm}

\begin{minipage}[t]{0.48\linewidth}
\centering
\includegraphics[width=0.95\linewidth]{fourholes_surface3D.png}

\vspace{0.2cm}

\small
(c) Three-dimensional visualization for $\varphi=7$
\end{minipage}
\hfill
\begin{minipage}[t]{0.48\linewidth}
\centering
\includegraphics[width=0.95\linewidth]{fourholes_surface3D_connected.png}

\vspace{0.2cm}

\small
(d) Three-dimensional visualization for $\varphi=10$
\end{minipage}

\caption{Dead-zone formation in a circular domain with four circular holes for $p=0.1$. The reconstructed dead-zone interfaces are highlighted in red.}
\label{fig:fourholes_deadcore}
\end{figure}

For \(\varphi=7\), the dead zone consists of five isolated islands, as
shown in Figure~\ref{fig:fourholes_deadcore}(a,c). Increasing the
Thiele modulus causes the islands to grow and merge into a connected
cross-shaped dead zone; see
Figure~\ref{fig:fourholes_deadcore}(b,d). The corresponding
three-dimensional visualizations further illustrate the strong spatial
localization of the solution near the active regions of the reactor.

In this paper, we developed a numerical approach for diffusion–reaction problems exhibiting dead-core phenomena in two spatial dimensions. Dead-core solutions corresponding to power-law reaction kinetics with fractional exponents were approximated using an IMEX time-marching scheme combined with the lumped finite element method. The numerical experiments demonstrated that even in geometrically simple reactor chambers, multiple disconnected dead zones may arise. 
%The presented computational results indicate that the proposed numerical methodology can serve as a useful tool for reactor design and process optimization. 
{\blue The numerical experiments demonstrate that complex dead-core topologies may emerge even in geometrically simple two-dimensional reactors. In particular, the appearance of multiple disconnected dead-zone islands and their subsequent merging into connected structures indicate that reactor geometry can fundamentally alter catalyst utilization patterns. These findings suggest that dead-core formation should be viewed not only as a local diffusion-reaction phenomenon but also as a geometry-dependent topological process.}

{\blue The present study is restricted to isothermal diffusion-reaction systems with a single reacting species and power-law kinetics. Extension to non-isothermal models, multi-species transport, and three-dimensional geometries remains the subject of future work.}
%In future work, we plan to perform more extensive numerical studies and investigate more complex geometries and non-isothermal reactions for several chemical species. 
Additionally, we intend to extend the radius reconstruction technique developed for the unit disk benchmark problem in order to obtain more accurate estimates of dead-core interfaces in domains containing holes. 

%27
\bibitem{BSS84}
C.~Bandle, R.~P.~Sperb, and I.~Stakgold,
\newblock Diffusion and reaction with monotone kinetics,
\newblock {\it Nonlinear Anal. Theory Methods Appl.} \textbf{8}(4), 321--333 (1984).
\doi{10.1016/0362-546X(84)90034-8}